\documentclass[12pt]{article}

\usepackage{graphics}
\usepackage{latexsym}
\usepackage{color}
\usepackage{epsfig}

\usepackage{amsthm}
\usepackage{graphicx}
\usepackage{color}
\usepackage{amsmath}
\usepackage{amssymb}
\usepackage{amscd}
\usepackage{latexsym}
\usepackage{graphics}
\usepackage{verbatim}
\usepackage{hyperref}

\setlength{\oddsidemargin}{0in}
\setlength{\evensidemargin}{0in}
\setlength{\topmargin}{-.25in}
\setlength{\textheight}{8.5in}
\setlength{\textwidth}{6.5in}
\setlength{\headheight}{.25in}
\setlength{\headsep}{.5in}

\theoremstyle{plain}
\newtheorem{theorem}{Theorem}[section]
\newtheorem{lemma}[theorem]{Lemma}
\newtheorem{corollary}[theorem]{Corollary}

\newtheorem{observation}[theorem]{Observation}
\newtheorem{example}[theorem]{Example}

\theoremstyle{definition}
\newtheorem{definition}[theorem]{Definition}

\theoremstyle{remark}



\newcommand{\ignore}[1]{}

\begin{document}

\title{Isoperimetric Sequences for Infinite Complete Binary Trees,
 Meta-Fibonacci  Sequences and
  Signed Almost Binary Partitions}

\author{
L. Sunil Chandran\footnotemark[1] \and Anita Das\footnotemark[1]
\and Frank Ruskey \footnotemark[2] }

\date{}

\maketitle                 
\renewcommand{\thefootnote}{\fnsymbol{footnote}}
\footnotetext[1]{ Computer Science and Automation Department, Indian
Institute of Science, Bangalore - 560012, India. {\tt\{sunil,anita\}@csa.iisc.ernet.in.} The second author's research is supported by Dr. D.S. Kothari fellowship from University Grants Commission.}

\footnotetext[2]{ Department of Computer Science, University of
Victoria, Victoria, BC, Canada. {\tt ruskey@cs.uvic.ca}. Research
supported in part by NSERC.}
\renewcommand{\thefootnote}{\arabic{footnote}}

\begin{abstract}

In this paper we demonstrate connections between three seemingly
unrelated concepts.
\begin {enumerate}
\item  The discrete isoperimetric problem in the infinite  binary
       tree with all the leaves at the same level, $ {\mathcal T}_{\infty}$:
       The $n$-th
       edge isoperimetric number
       $\delta(n)$ is defined to be $\min_{|S|=n, S \subset V( {\mathcal T}_{\infty}) } |(S,\overline S)|$,
       where $(S,\overline S)$ is the set of edges in the cut
       defined by $S$.

\item  Signed almost binary partitions: This is the special
       case of the coin-changing problem where the coins are drawn from
       the set  $\{ \pm (2^d - 1):  \mbox { $d$ is a positive integer } \}$.
       The quantity of interest is $\tau(n)$, the minimum number of coins necessary to make change for
       $n$ cents.

\item  Certain Meta-Fibonacci sequences:
       The Tanny sequence is defined by $T(n)=T(n{-}1{-}T(n{-}1))+T(n{-}2{-}T(n{-}2))$
       and the Conolly sequence is defined by $C(n)=C(n{-}C(n{-}1))+C(n{-}1{-}C(n{-}2))$,
       where the initial conditions are $T(1) = C(1) = T(2) = C(2) = 1$.
       These are well-known ``meta-Fibonacci" sequences.

\end  {enumerate}

The main result that ties these three together is the following:
$$ \delta(n) = \tau(n) = n+ 2 + 2  \min_{1 \le k \le n} (C(k) - T(n-k) - k).$$
Apart from this,  we prove several other results which bring out the interconnections
between the above three concepts.

\end{abstract}

\textbf{Keywords:} binary tree, isoperimetric properties of graphs,
meta-Fibonacci sequences, partitions of an integer.

\section{Introduction and Background}

In this paper we consider three well-studied, but seemingly unrelated
concepts and bring out the interconnections between them. We begin
by describing each concept, together with some of its background.

\subsection {Discrete Isoperimetric Problem on Infinite Binary Trees}

Let $G = (V, E)$ be a graph. For $X \subseteq V(G)$, a cut $(X,
\overline{X})$ in $G$ is defined as the set $\{(u,v) \in E(G) | u \in X, \ v
\in V-X\}$. The $n$-th edge isoperimetric number of a graph $G$,
denoted $\delta(n,G)$ is the least number of edges in any cut
$(X,\overline{X})$ where $|X| = n$.  For finite graphs, we take
$1 \le n \le |V(G)|$. In the case of infinite graphs,
$\delta(1,G),\delta(2,G), \ldots,$ forms an infinite sequence.

The discrete isoperimetric problems  form a very useful and important
subject in graph theory and combinatorics. See \cite{Bol86}, Chapter
16 for a  brief introduction on isoperimetric problems. For a
detailed treatment see the book by Harper \cite{Har04}. See also the
surveys by Leader \cite {Lea91} and by Bezrukov \cite{Bez94,Bez99}
for a comprehensive overview of work in the area.
Isoperimetric problems are
typically studied for graphs with special (usually symmetric)
structure.
The study of
isoperimetric properties of binary trees was initiated by Otachi
{\sl et al.} \cite{otachi} and continued in
\cite{BharadwajChandran,BCD}.

\begin{figure}
\includegraphics[width=5in]{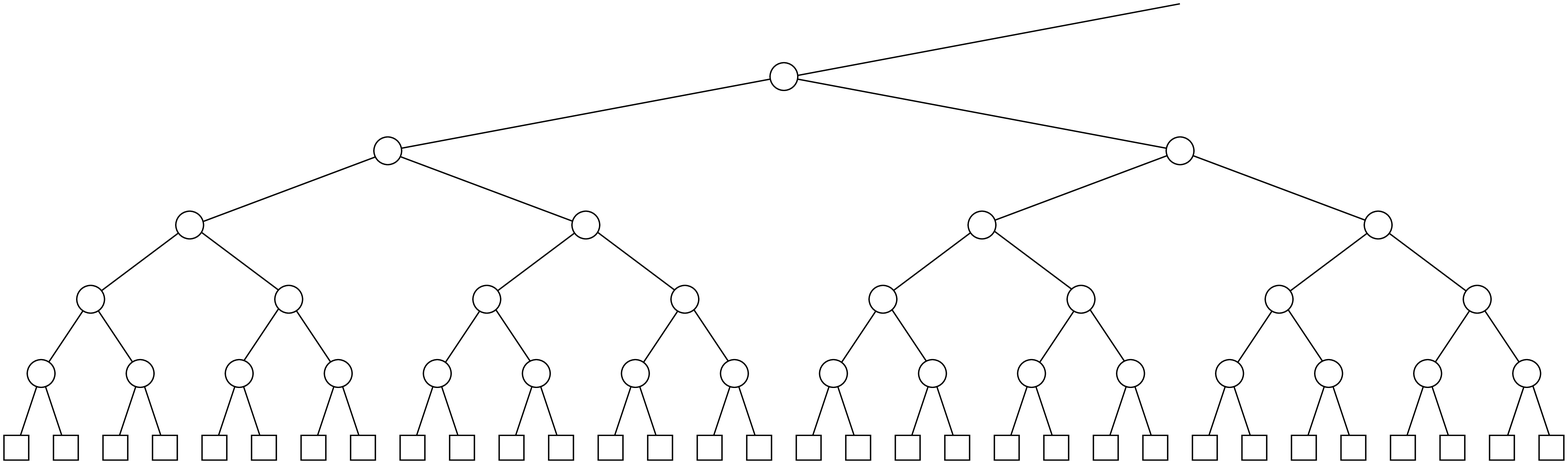}
\caption{The leftmost part of the infinite binary tree $\mathcal{T}_\infty$
with all leaves at the same level.}
\label{fig:BT1}
\end{figure}

Define the infinite binary tree $\mathcal{T}_\infty$ whose leaves
are all at the same level, as shown in Figure \ref{fig:BT1}.
In this paper we will study  the edge isoperimetric sequence
of  $\mathcal{T}_\infty$.  We will use  $\delta(n)$ to
denote $\delta(n, \mathcal {T}_{\infty})$.
 A typical cut in $\mathcal{T}_\infty$ is illustrated in Figure \ref{fig:BT3}.

We will also study  two natural variations of the edge
isoperimetric problem
on $\mathcal {T}_{\infty}$. The first one is by  restricting  $X$ to be connected
 i.e., we minimize over subsets $X$ of $V(\mathcal {T}_{\infty})$, where $X$ induces a subtree and
$|X| = n$. Then the minimum
value is called the $n$-th connected edge isoperimetric number
and is  denoted by  $\delta_C(n)$.
 In Figure \ref{fig:BT4}, on the right we have illustrated a subset $X$  of
 vertices with $|X|= 24$, inducing a subtree in ${\mathcal T}_{\infty}$,
 such that $|(X,\overline X)| = \delta_C(24)
= 2$.

The second variation  is by requiring that  the infinite set
$\overline{X}$ be  connected.  It is easy to see that  this
condition is equivalent to restricting  $X$  to induce
 a disjoint collection of
complete  binary trees with all leaves at the lowest level of
$\mathcal{T}_\infty$. In this case the minimum value is called the
$n$-th co-connected isoperimetric number  and is  denoted by
$\delta_P(n)$. \footnote { P in $\delta_P$ stands for `positive'. It 
is chosen to be consistent with the notation $\tau_P$ from section \ref {ABPsection}. }

 In Figure \ref{fig:BT4}, on the left we have illustrated a subset $X$  of
 vertices with $|X|= 24$, with $\overline X$
 inducing a subtree in ${\mathcal T}_{\infty}$,
 such that $|(X,\overline X)| = \delta_P(24)
= 4$. Note that $X$ consists of a collection of complete binary trees
with all leaves at the lowest level of ${\mathcal T}_{\infty}$.

\begin{figure}
\begin{center}
\includegraphics[width=5in]{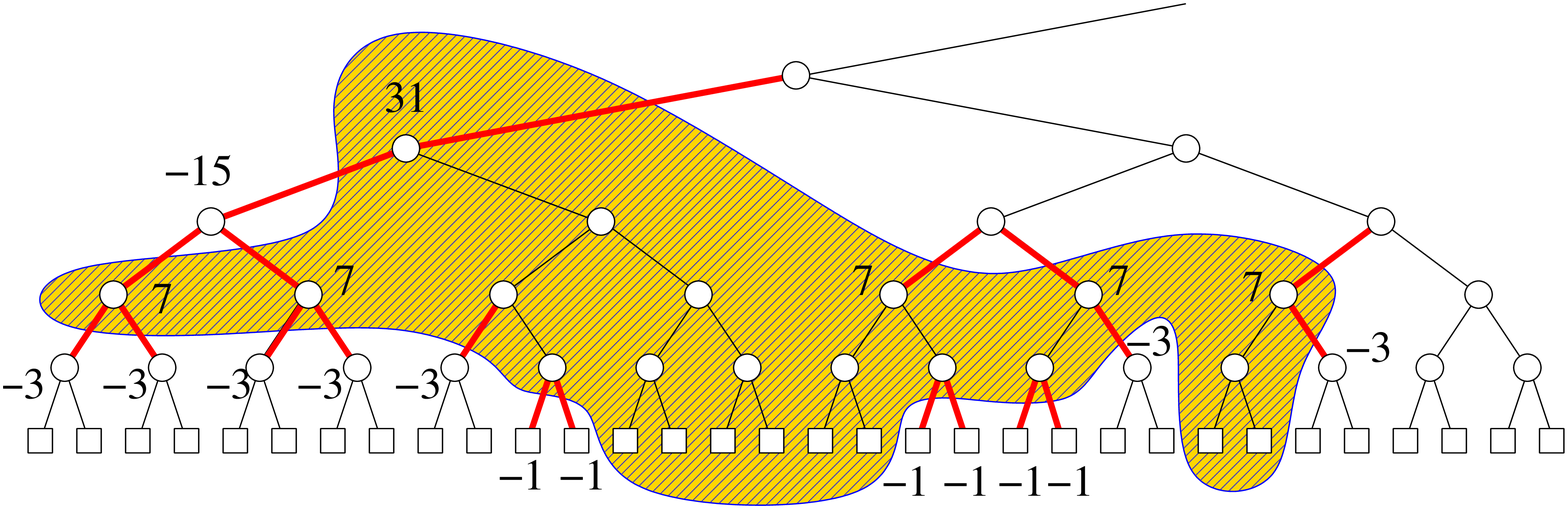}
\end{center}
\caption{A subset $S$ of $\mathcal{T}_\infty$ with $|S|=24$ and
$|(S,\overline{S})|=20$.  The numbers on the nodes are $f_S(v)$ from
section \ref {deltaTausection}.}
\label{fig:BT3}
\end{figure}

\begin{figure}
\includegraphics[width=5in]{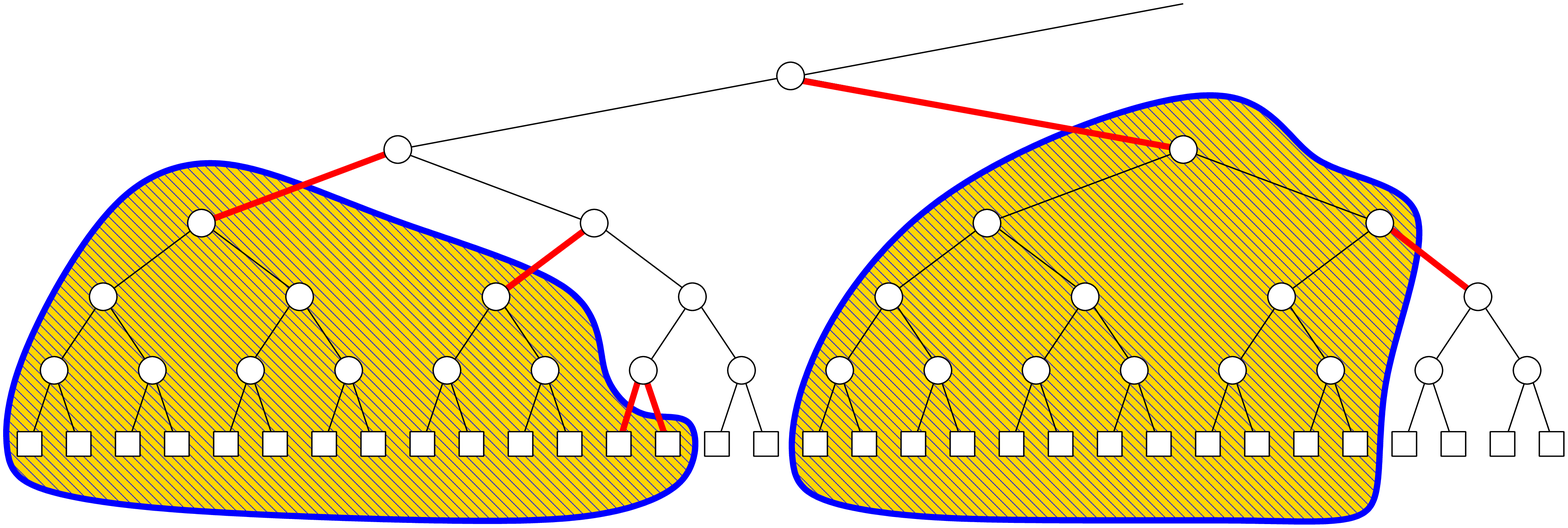}
\caption{On the left: a subset illustrating $\delta_P(24) =
\delta_P(15+7+1+1)= 4$. On the right: a subset illustrating
$\delta_C(24) = \delta_C(31-7) = 2$.} \label{fig:BT4}
\end{figure}

 {
\subsection{Almost binary partitions: A special case of coin changing problem}

\label {ABPsection}

}

We can state  the well-known coin changing problem
as follows: Let $F$ be a subset of integers, i.e.  $F \subseteq \mathcal Z$.  Given a positive integer $n$, find
the smallest $k$ such that $n$ can be partitioned in to $k$ parts, such
that each part belongs to $F$. In other words, we require a partition of $n$,
of the form $n = \sum_{1 \le i \le k} a_i$, where $a_i \in F$, for the
smallest possible $k$.
Note that here we do not assume that $a_i \ne a_j$ for $i \ne j$.
A \emph{binary partition} of a number $n$ is one that has all parts
of the form $2^k$, i.e. $F = \{ 2^k : \mbox { $k$ is a non-negative integer } \}$.
 Several papers have been written about binary partitions of integers,
e.g. Booth \cite{Booth}, Prodinger \cite{Prodinger} and Sawada
\cite{Sawada}.

We call a partition of $n$ of the  form  $\sum_{1 \le i \le k} a_i$
an `almost binary partition' (ABP)
if each $a_i \in  \{ 2^d - 1 : \mbox {  $d$ is a positive integer } \}$,
and a signed almost binary partition (SABP)  if each
$a_i \in \{ \pm (2^d - 1): \mbox {  $d$ is a positive integer } \}$.

The number $2^\ell-1$ occurs so often in the rest of this
paper that we adopt the following two notations for it: $\nu_\ell =
2^\ell-1$ or $\nu(\ell) = 2^\ell-1$. Furthermore we extend the
notation to sets, so that if $P$ is a multi-set of natural numbers,
then

\[ \nu(P) = \sum_{i \in P} \nu_i = \sum_{i \in P} (2^i-1).
\]

Note that
a SABP of $n$ is specified by two multisets $P$ (for positive) and
$N$ (for negative) such that

\[
n \ =\  \nu(P) - \nu(N) \ =\  \sum_{i \in P} (2^i-1) - \sum_{i \in N} (2^i-1).
\]

Sometimes we refer to the pair $(P,N)$ as the partition. We also use
the notation $|(P,N)|$ to mean $|P|+|N|$.
Note that an ABP of $n$ can be thought of as an SABP,  $(P,N)$ of $n$ where
$N = \emptyset$.

We also define the  connected SABP
(abbreviated as CABP) of $n$ to be a SABP $(P,N)$ of $n$, where $|P| = 1$.
The definition of CABP may look
somewhat unnatural, but it helps crucially  in establishing the
interconnections among  the three
problems studied in this paper.

 Define $\tau(n)$ to be the
least number of parts in any SABP of $n$. Similarly define
$\tau_C(n)$ and $\tau_P(n)$ to be the least number of parts in any
CABP and ABP of $n$, respectively. (The $P$ in notation $\tau_P$ stands for
positive, since all terms are required to be positive in an
ABP).  If a SABP (ABP or CABP)  has the least number
of parts then we will say that it is \emph{minimal}; it is one that
minimizes $|(P,N)| = |P|+|N|$.

\subsection{Meta-Fibonacci sequences}

In this paper we will study two of the most well-studied Meta-Fibonacci
sequences: The Tanny sequence, defined by S. Tanny  \cite {Tanny}  and the Conolly
sequence defined by B. W. Conolly \cite {Conolly}.
The \emph{Tanny sequence} is
given by the following recurrence relation, where $T(1)=T(2)=1$.

\begin{equation}
T(n) = T(n-1-T(n-1)) + T(n-2-T(n-2)), \ \ \ \ n > 2
\end{equation}

The \emph{Conolly sequence} is given by the following recurrence
relation, where $C(1)= C(2) = 1$.

\begin{equation}
C(n) = C(n-C(n-1)) + C( n-1-C(n-2)), \ \ \ \ n > 2
\label{eq:gfC}
\end{equation}

In \cite{JacksonRuskey} it is proven that the ordinary generating
functions $T(z)$ and $C(z)$ of the Tanny and Conolly numbers are

\begin{equation}
T(z) = z \sum_{n \ge 0} \prod_{k=1}^n (z+z^{2^k}) \;\;\; \text{ and
} \;\;\; C(z) = \frac{z}{1-z} \prod_{n \ge 0} (1+z^{2^n-1})
\label{eq:gfTC}
\end{equation}

\subsection {Our Results}

In this paper we prove several results which bring out the interconnections
among the three problems
described in the previous sections. The main result is the
following:

$$  \delta(n) = \tau (n) = n+ 2 +  2 \min_{0 \le k \le n}  (C(k) - T(n-k) - k)$$

The following  result which was derived as an intermediate step in proving
the main result, is of independent interest. This result allows to
prove a conjecture of J. Arndt,  from OEIS \cite {OEIS}, regarding the
generating function of the sequence, $\delta_P(n), n \in \mathcal N \setminus \{0\} $.

$$ \delta_P(n) = 2 C(n) - n$$

For all $n \ge 1$, it is clear that
$\delta(n) \le \delta_C(n)$ and $\delta(n) \le \delta_P(n)$. See
Table \ref{table:numbers} for the values of these sequences for
small values of $n$, along with the corresponding values of
$T(n)$ and $C(n)$.  In the OEIS, these are sequences
A005811, A100661, A192099, A006949 and A046699, respectively \cite{OEIS}.

\begin{table}
{\small
\begin{tabular}{c|ccccccccccccccccccccccc}
$n$      & 1 & 2 & 3 & 4 & 5 & 6 & 7 & 8 & 9 & 10 & 11 & 12 & 13 & 14 & 15 & 16 & 17 & 18 & 19 & 20 \\ \hline
$\delta_C(n)$ & 1 & 2 & 1 & 2 & 3 & 2 & 1 & 2 & 3 & 4 & 3 & 2 & 3 & 2 & 1 & 2 & 3 & 4 & 5 & 4 \\
$\delta_P(n)$ & 1 & 2 & 1 & 2 & 3 & 2 & 1 & 2 & 3 & 2 & 3 & 4 & 3 & 2 & 1 & 2 & 3 & 2 & 3 & 4 \\
$\delta(n)$   & 1 & 2 & 1 & 2 & 3 & 2 & 1 & 2 & 3 & 2 & 3 & 2 & 3 & 2 & 1 & 2 & 3 & 2 & 3 & 4 \\ \hline
$T(n)$   & 1 & 1 & 2 & 2 & 2 & 3 & 4 & 4 & 4 &  4 &  5 &  6 &  6 &  7 &  8 &  8 &  8 &  8 &  8 &  9 \\
$C(n)$   & 1 & 2 & 2 & 3 & 4 & 4 & 4 & 5 & 6 & 6 & 7 & 8 & 8 & 8 & 8 & 9 & 10 & 10 & 11 & 12 \\
\end{tabular}
} \caption{The values of $\delta(n),\delta_P(n)$, $\delta_C(n)$, $T(n)$ and $C(n)$ for $1 \le n \le 20$.}
\label{table:numbers}
\end{table}

In Table \ref {table:numbers}, it is remarkable how often the three values
$\delta_C(n)$, $\delta_P(n)$, and $\delta(n)$ are identical. The
first value of $n$ for which $\delta(n)$ is strictly less than both
$\delta_C(n)$ and $\delta_P(n)$ is when $n = 43$; then $\delta(43) =
3$ and $\delta_C(43) = \delta_P(43) = 5$. The first such even value
is $n = 282$. However, the number of times that $\delta_C(n) \neq
\delta_P(n)$ for $1 \le n \le 10^4$ is $7187$, so the true behavior
is only becoming apparent when $n$ is large.

In tune with the literature on discrete isoperimetric problems, the
most important question here is to find an explicit formula for
$\delta(n)$ in terms of $n$. But as in the case of many other 
graph classes,  
this looks extremely difficult at this stage. So it makes sense to
seek a better understanding of $\delta$ in terms of the easier
sequences $\delta_P$ and $\delta_C$.  (We will show in this paper
that these latter sequences are much easier to deal with than $\delta$:
For example, $\delta_P(n)$ and $\delta_C(n)$ can be computed in
$O(\log n)$ time, whereas as of now, we have only an $O(n)$ time algorithm
to compute $\delta(n)$.) In this context, the following questions
become relevant: What would be the necessary and sufficient conditions for
a number $n$ to satisfy the equality $\delta(n) = \delta_P(n)$ or 
$\delta(n) = \delta_C(n)$? Let ${\cal X} = \{ n \in \mathcal N \setminus \{0\}:
\delta(n) = \delta_P(n) \}$  and
${\cal Y} = \{ n \in \mathcal N \setminus \{0\}: \delta(n) = \delta_C(n) \}$.
Also let ${\cal X}_t = \{ n \in {\cal X}: n < t \}$ and
${\cal Y}_t = \{ n \in {\cal Y}: n < t \}$.  We show that there is a 
one to one correspondence between ${\cal X}_{2^k}$ and ${\cal Y}_{2^k}$,
for $k \ge 2$. It also follows that if we know the numbers in
${\cal X}_{2^k}$ then we can also get the numbers in ${\cal Y}_{2^k}$.
It follows that it is sufficient to study one of these two sets.

We are still unable to characterize the numbers that belong to ${\cal X}$,
but we give a non-trivial sufficient condition for a number $n$ to
belong to ${\cal X}$, in terms of the nature of the optimal ABP of $n$.
Suppose $n$ has
an ABP,  $ \mu_{i_1} + \mu_{i_2}
+ \ldots \mu_{i_t}$ with  $ i_1 > i_2 >  \ldots > i_t$
and $i_j   \ge i_{j+1} + k$ for $1 \le j \le t-1$, then
we say that this ABP of $n$ satisfies the gap $k$ condition.
We prove that $\delta(n) = \delta_P(n)$, if $n$ has
an ABP satisfying the gap-$3$ condition. It is  not possible to replace
the gap-$3$ condition with gap-$2$ condition:
there exist  numbers $n$  which satisfy the gap 2 condition, but
with $\delta(n) \ne \delta_P(n)$.

The gap-3 theorem discussed in the previous paragraph turned out to
have an unexpected consequence: We could improve the previously best
known lower bound on the edge isoperimetric peak of $B_d$, the 
complete binary tree of depth $d$, studied in 
\cite {otachi,BharadwajChandran,BCD}.

\section {Preliminaries on  (signed) almost binary partitions}

\subsection {Almost Binary Partitions}

Recall that  $\tau_P(n)$ is  the least number of parts possible
in an ABP of $n$.
For example $\tau_P(12) = 4$ since $12 = 3 + 3 + 3 +3 = 7 +
3 + 1 + 1$, and there is no way to write 12 using fewer parts of the
right form.  As mentioned before, this is an instance of a ``coin-changing problem" (make
change using the least number of coins), where the denominations of
coins are taken from the set $\mathbf{A} = \{
1,3,7,\ldots,2^k-1,\ldots \}$.
A \emph{greedy} solution to the coin changing problem is one where
the largest possible coin is successively chosen. For our earlier
example, the partition $7+3+1+1$ would be  the one chosen by the greedy
algorithm.
We define $\text{Greedy}(n)$ to be the multi-set of exponents that
are used in finding the greedy partition of $n$. For example
$\text{Greedy}(12) = \{3,2,1,1\}$, since $7 = 2^3-1,3=2^2-1,$ and
$1=2^1-1$.

We
will show that greedy algorithm outputs the
least number of coins if the denominations of coins
are from the set  $\mathbf {A} = \{ 2^d - 1: d  \mbox { is a positive integer } \}$.
Let $G_\infty$ be the greedy algoithm for the coin changing problem,
when the denominations come from the set $A  = \{ 2^d - 1: d  \mbox { is a positive integer } \}$ and let  $G_\infty(n)= |Greedy(n)|$ be
the number of parts in the partition of $n$
returned by the algorithm $G_\infty$.  Also
for $k \in \mathcal N \setminus \{ 0 \}$, let $G_k$ denote the greedy algorithm when
the denominations belong to
the set $\{ 2^d - 1:  d \le k, \mbox { and $d $ is a positive integer } \}$ and
let $G_k(n)$ be the number of parts in the partition on $n$, returned
by the algorithm $G_k$.

\begin{lemma}
The greedy algorithm solves the almost binary partition  problem.
In other words, $\tau_P(n) = |\text {Greedy} (n)|$.
\label{lemma:greedy}
\end{lemma}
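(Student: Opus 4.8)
The plan is to prove optimality of greedy by strong induction on $n$, using a few value-preserving ``exchange'' identities among the coin values $\nu_d = 2^d-1$. Let $\nu_k$ be the largest coin with $\nu_k \le n$, so $n < \nu_{k+1} = 2\nu_k+1$ and hence $n-\nu_k \le \nu_k < n$; by definition $\text{Greedy}$ removes one $\nu_k$ and continues on $n-\nu_k$, giving $|\text{Greedy}(n)| = 1 + |\text{Greedy}(n-\nu_k)|$. Since $\tau_P(n) \le |\text{Greedy}(n)|$ is immediate, it suffices to prove the reverse inequality, and for that it is enough to exhibit \emph{one} minimal ABP of $n$ that uses the coin $\nu_k$: deleting that coin produces an ABP of $n-\nu_k$ with $\tau_P(n)-1$ parts, so $\tau_P(n)-1 \ge \tau_P(n-\nu_k) = |\text{Greedy}(n-\nu_k)|$ by the induction hypothesis, which yields $\tau_P(n) \ge |\text{Greedy}(n)|$.

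The engine of the argument is the pair of identities $\nu_j + \nu_j + \nu_1 = \nu_{j+1}$ (three coins collapse to one) and, more generally, $\nu_j + \nu_j + \nu_i = \nu_{j+1} + \nu_{i-1} + \nu_{i-1}$ for $i \ge 2$ (three coins exchange for three coins that include the strictly larger coin $\nu_{j+1}$); in particular $3\nu_j = \nu_{j+1} + 2\nu_{j-1}$. Each is verified at once by substituting $\nu_d = 2^d-1$. The first identity is count-reducing and cannot occur inside a minimal partition, while the others preserve the part count but raise a coin to a higher level.

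For the main step, among all minimal ABPs of $n$ I would choose the one, $M$, whose multiset of coins is lexicographically largest (largest coin as big as possible, then the next, and so on). I claim the largest coin of $M$ is $\nu_k$, which finishes the proof. Suppose instead the largest coin is $\nu_j$ with $j < k$; then all coins are $\le \nu_{k-1}$ while their total is $n \ge \nu_k = 2\nu_{k-1}+1 > 2\nu_j$, forcing at least three coins. A counting bound rules out all coins being distinct, since the sum of the distinct coins $\nu_1,\dots,\nu_j$ is $2^{j+1}-j-2 < 2^{j+1}-1 \le n$; hence some value repeats. If the top value $\nu_j$ itself repeats, I apply the appropriate identity to two copies of $\nu_j$ together with a third coin: a partner $\nu_1$ triggers the count-reducing collapse (contradicting minimality), while any larger partner produces $\nu_{j+1}$ (contradicting lexicographic maximality). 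If instead the top value occurs only once but some smaller value repeats, I apply the same exchange to two copies of the largest repeated value, raising a coin by one level and strictly increasing the lexicographic rank.

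The main obstacle is precisely this last configuration: when the top coin occurs once, I must guarantee that an exchange lower down can always be arranged to strictly increase the sorted coin sequence, which requires care in the degenerate subcases where the only available partner is $\nu_1$ (forcing a count-reducing rather than a rank-increasing exchange) and where the partner sits one level below the repeated coin (so that the naive exchange is an identity). I would dispose of these by always selecting the largest repeated value and its largest available partner and tracking the effect on the sorted sequence. A cleaner conceptual route, which I would mention as an alternative, notes that an ABP of $n$ with exponents $d_1,\dots,d_t$ satisfies $\sum_i 2^{d_i} = n+t$; thus $\tau_P(n)$ is the least $t$ for which $n+t$ is a sum of exactly $t$ powers of two each at least $2$, a representation that exists exactly when $n+t$ is even with binary digit sum at most $t$, and this quantity can be compared directly with $|\text{Greedy}(n)|$.
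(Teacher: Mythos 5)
Your overall strategy (strong induction on $n$ plus value-preserving exchange identities, reducing everything to showing that \emph{some} minimal ABP of $n$ contains the largest coin $\nu_k \le n$) is sound, and Case 1 (the top coin repeats) is handled correctly. But Case 2 --- the configuration you yourself flag as the main obstacle --- has a genuine gap, and the rule you propose to resolve it (take the largest repeated value $\nu_r$ together with its \emph{largest} available partner) does not work. The exchange replaces $\{\nu_r,\nu_r,\nu_i\}$ by $\{\nu_{r+1},\nu_{i-1},\nu_{i-1}\}$. If the partner has $i>r+1$, the highest level whose multiplicity changes is level $i$, which \emph{loses} a coin, so the resulting minimal ABP is lexicographically \emph{smaller} than $M$: no contradiction with your choice of $M$. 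If $i=r+1$, the exchange is the identity (incidentally, your parenthetical has this backwards: the degenerate case is a partner one level \emph{above} the repeated coin, not below). A contradiction is obtained only from a partner at level $i\le r$: then level $r+1$ gains a coin and the lexicographic rank strictly increases, or, for $i=1$, the collapse $\nu_r+\nu_r+\nu_1=\nu_{r+1}$ contradicts minimality. Nothing in your proposal shows that such a low partner exists, and this is exactly where the standing hypothesis $j<k$, i.e.\ $n\ge \nu_{j+1}$, must be used --- you never invoke it in Case 2. The missing step is a counting argument: if the two copies of $\nu_r$ were the only coins at levels $\le r$, then all remaining coins are distinct and lie in levels $r+1,\dots,j$, whence $n\le \nu_j+\nu_{j-1}+\cdots+\nu_{r+1}+2\nu_r = 2^{j+1}-(j-r)-2\le 2^{j+1}-3 < \nu_{j+1}\le \nu_k \le n$, a contradiction. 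Inserting this inequality guarantees the needed partner and closes Case 2; without it the proof is incomplete. (The ``cleaner conceptual route'' you mention via $\sum_i 2^{d_i}=n+t$ is only a reformulation: showing that the least feasible $t$ equals $|\text{Greedy}(n)|$ is the original optimality question all over again, so it cannot be waved through.)

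For comparison, the paper's proof is entirely different and much shorter: it invokes the Magazine--Nemhauser--Trotter one-point test for when a greedy coin-changing algorithm remains optimal after a new largest denomination is added (here $2^{k+1}-1 = 3(2^k-1) - \rho_k$ with $\rho_k = 2(2^{k-1}-1)$, so $1+G_k(\rho_k)=3\le p_k=3$), and then inducts on $k$. Your exchange argument, once repaired as above, has the advantage of being self-contained and elementary rather than citation-based; but as written it does not yet constitute a proof.
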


\begin{proof}
According to a result of Magazine, Nemhauser, and Trotter \cite{MNT} (also described
in the book of Hu and Shing \cite{HuShing}), given that the greedy algorithm ${G}_k$
gives optimal solutions,
 the greedy algorithm
$G_{k+1}$ gives optimal solutions
if and only if there exist  $p_k$ and $\rho_k$ such that 
\[
1 + G_k(\rho_k) \le p_k, \mbox{ where } 2^{k+1}-1 = p_k (2^k-1) - \rho_k \mbox{ with } 0 \le \rho_k < 2^k-1.
\]
Solving the ``where" condition, we get $p_k = 3$ and $\rho_k = 2(2^{k-1}-1)$.
The greedy ABP for $\rho_k =  2(2^{k-1}-1)$
 is $(2^{k-1}-1)+(2^{k-1}-1)$ and thus $G_k(\rho_k) = 2$.
Thus the inequality $1 + G_k(\rho_k) \le p_k$ is satisfied for all $k$.
Clearly  greedy algorithms $G_1,G_2$ etc
give the optimum solution.  The result follows by induction.
\end{proof}

The following lemma implies that in the greedy solution there are at
most two equal values. Furthermore, if there are two equal values,
then they are the two smallest values.

\begin{lemma}
Let $d_1 \ge d_2 \ge \cdots \ge d_s$ be a sequence of positive
integers such that $\sum_{1 \le i \le s} \nu(d_i) = n$. Then

\begin {enumerate}

\item   $\{d_1,d_2,\ldots,d_s\} = \text{Greedy}(n)$ if and only if $d_1 >
d_2 > \ldots > d_{s-1}$.

\item   $\max \text { Greedy(n) }  = \left \lfloor \log (n+1)  \right \rfloor$.

\item   If  $\max \text {Greedy} (n) = d_1$, then $n  \le  2^{d_1 + 1} - 2$.

\end {enumerate}

 \label{lemma:greedseq}

\end{lemma}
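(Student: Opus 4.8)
The plan is to read parts (2) and (3) off the definition of the greedy algorithm and to concentrate the work on part (1), handling its two directions separately. Throughout, write $\ell(n)$ for the largest exponent $\ell$ with $\nu_\ell = 2^\ell - 1 \le n$; equivalently $\ell(n) = \lfloor \log(n+1) \rfloor$, and note that $\ell$ is nondecreasing in $n$.

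Part (2) is essentially immediate: $G_\infty$ first selects the largest admissible coin, namely $\nu_{\ell(n)}$, and every later selection fits inside a remainder that never exceeds $n$, so no coin with exponent larger than $\ell(n)$ is ever used. Hence $\max \text{Greedy}(n) = \ell(n) = \lfloor \log(n+1) \rfloor$. Part (3) then follows by unwinding the floor: $d_1 = \max\text{Greedy}(n) = \lfloor \log(n+1)\rfloor$ means $2^{d_1} \le n+1 < 2^{d_1+1}$, and the right inequality rearranges to $n \le 2^{d_1+1} - 2$.

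For the forward direction of part (1) I would track the remainder through the greedy recursion. If at some step the remainder is $r$ and greedy takes $\nu_k$, then $\nu_k \le r < \nu_{k+1}$, i.e. $r \le 2^{k+1} - 2 = 2\nu_k$, so the updated remainder $r - \nu_k$ satisfies $r - \nu_k \le \nu_k$, with equality exactly when $r = 2\nu_k$. When the inequality is strict the next coin has a strictly smaller exponent, and when $r = 2\nu_k$ the algorithm is forced to pick $\nu_k$ once more and then halts with remainder $0$. Consequently equal exponents can occur only as the terminal pair, which is exactly the assertion that the greedy exponent sequence satisfies $d_1 > d_2 > \cdots > d_{s-1} \ge d_s$.

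The backward direction carries the real content. Given a sequence with $d_1 > \cdots > d_{s-1} \ge d_s$ and $\sum_i \nu(d_i) = n$, I would prove the pivotal claim that $d_1 = \ell(n)$ and then strip off $\nu_{d_1}$ and induct: the truncated sequence $d_2 > \cdots > d_{s-1} \ge d_s$ keeps the required shape and sums to $n - \nu_{d_1}$, so by induction it equals $\text{Greedy}(n-\nu_{d_1})$ and the whole sequence equals $\text{Greedy}(n)$. In the claim the bound $d_1 \le \ell(n)$ is trivial since $n \ge \nu_{d_1}$; the reverse bound reduces to showing $n \le 2^{d_1+1} - 2$, for which I would estimate $\sum_{i \ge 2} 2^{d_i}$. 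If all exponents are distinct this sum is at most $\sum_{j=1}^{d_1-1} 2^j = 2^{d_1} - 2$; if instead the two smallest coincide at a value $m$, they contribute $2\cdot 2^m = 2^{m+1}$ and a telescoping estimate over the distinct exponents lying in $[m+1, d_1-1]$ still keeps $\sum_{i \ge 2} 2^{d_i} \le 2^{d_1}$. Subtracting the number of parts $s$ then gives $n \le 2^{d_1+1} - 2$ in every case, forcing $\ell(n) \le d_1$ and settling the claim. I expect this last summation bound --- specifically, checking that a repeated smallest pair cannot drive $\sum_{i \ge 2} 2^{d_i}$ past $2^{d_1}$ --- to be the only delicate point; the rest is bookkeeping around the greedy recursion and a routine induction.
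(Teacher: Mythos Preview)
Your argument is correct and follows essentially the same route as the paper: both pivot on the inequality $\nu_{\lfloor \log(n+1)\rfloor} \le n \le 2\nu_{\lfloor \log(n+1)\rfloor}$, read parts (2) and (3) off it, and then unwind part (1) by induction along the greedy recursion $\text{Greedy}(n) = \{\lfloor \log(n+1)\rfloor\} \cup \text{Greedy}(n-\nu_{\lfloor \log(n+1)\rfloor})$. Your treatment is in fact more complete than the paper's: the paper only sketches the forward implication of part (1) (the greedy sequence has the stated shape) and leaves the converse to the reader, whereas you supply it explicitly via the estimate $\sum_{i\ge 2} 2^{d_i} \le 2^{d_1}$, which pins down $d_1 = \lfloor\log(n+1)\rfloor$ and lets the induction go through.
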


\begin{proof}
We will show that
\begin{equation}
2^{\lfloor \log (n+1) \rfloor} -1 \le n \le 2 ( 2^{\lfloor \log (n+1)
\rfloor} -1 ). \label{ineq:greed}
\end{equation}

Note that

\[
n+2 \le 2^{\lceil \log(n+2) \rceil} \le 2^{\lfloor \log(n+2) \rfloor + 1}.
\]

But, unless $n+2 = 2^k$, we have $\lfloor \log(n+2) \rfloor = \lfloor
\log(n+1) \rfloor$. Thus, if $n+2 \ne 2^k$,  $n+2 \le 2^{\lfloor \log(n+1)
\rfloor + 1}$ which implies the right inequality in
(\ref{ineq:greed}). On the other hand, if $n+2 = 2^k$, then an easy
calculation shows that the right inequality is, in fact, an
equality.

These inequalities in  (\ref {ineq:greed})
   show that the integer first chosen by the greedy
algorithm is $2^{\lfloor \log  (n+1) \rfloor} -1$ and 
therefore $\max (Greedy(n)) = \lfloor \log (n+1) \rfloor$. From 
this, part (2) of the Lemma  immediately follows, and also part (3) follows from
the second inequality in (\ref {ineq:greed}).
  We also see from inequality (\ref {ineq:greed})
that $2^{\lfloor
\log (n+1) \rfloor} -1$ can be chosen at most twice. Furthermore, if
it is chosen twice, then the algorithm terminates. Now to formally
 prove part  (1) of Lemma, we can observe that $Greedy(n) = \{   \nu_{\lfloor 
\log (n+1) \rfloor}   \} \cup Greedy (n -  \nu_{\lfloor \log (n+1) \rfloor})$
and apply induction.

\end{proof}

\subsection{Signed almost binary partitions}

Let $(P,N)$ be a SABP of $n$. If $(P,N)$ is a minimal SABP then by
Lemma \ref{lemma:greedy} we may assume that $P =
\text{Greedy}(\nu(P))$ and $N = \text{Greedy}(\nu(N))$.

We say that a SABP is in \emph{normal form} if the following three conditions are met:
\begin{itemize}
\item[(A)]
$P \cap N = \emptyset$.
\item[(B)]
$P = \text{Greedy}(\nu(P))$ and $N = \text{Greedy}(\nu(N))$.
\item[(C)]
$\max(P) \in \{ \lfloor \log n \rfloor, \ 1+ \lfloor \log n \rfloor \}$.
\end{itemize}

\begin{theorem}
\label {thm:normalform}
Every positive integer $n$
has a minimal  SABP in normal form.
\label{thm:normal}
\end{theorem}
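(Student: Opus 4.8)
The plan is to start from an arbitrary minimal SABP of $n$ and impose conditions (A), (B), (C) one at a time, each without destroying minimality or the conditions already arranged. Condition (A) is automatic: if $d \in P \cap N$ for a minimal SABP $(P,N)$, then deleting one copy of $d$ from each of $P$ and $N$ leaves $\nu(P) - \nu(N) = n$ unchanged but lowers $|P|+|N|$ by $2$, contradicting minimality. For condition (B) I would replace $P$ by $\text{Greedy}(\nu(P))$ and $N$ by $\text{Greedy}(\nu(N))$; by Lemma \ref{lemma:greedy} this does not increase $|P|$ or $|N|$ and leaves $\nu(P),\nu(N)$ (hence $n$) unchanged, so the result is again a minimal SABP. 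Moreover (A) is preserved, since the greedified pair is still minimal and the cancellation argument again forbids a common element. Thus every $n$ has a minimal SABP satisfying both (A) and (B), and only (C) remains to be secured.

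The lower half of (C) comes for free from (B): writing $m = \max(P)$, Lemma \ref{lemma:greedseq}(2) gives $m = \lfloor \log(\nu(P)+1)\rfloor$, and since $\nu(P) = n + \nu(N) \ge n$ we obtain $m \ge \lfloor \log(n+1)\rfloor \ge \lfloor \log n\rfloor$. The substance of the theorem is the upper bound $m \le 1 + \lfloor \log n\rfloor$, which I would establish by an extremal argument. Among all minimal SABPs of $n$ satisfying (A) and (B), choose one minimizing the positive mass $\nu(P)$; this is consistent with having imposed (A) and (B), because greedification does not alter $\nu(P)$. Suppose for contradiction that $m = \max(P) \ge 2 + \lfloor \log n\rfloor$, so that $n < 2^{m-1}$.

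The key structural observation is that $\max(N) = m-1$ exactly. For the upper bound, $\nu(N) \le \nu(P) \le 2^{m+1}-2$ (Lemma \ref{lemma:greedseq}(3)) forces $\max(N) = \lfloor\log(\nu(N)+1)\rfloor \le m$, while (A) excludes $m \in N$, leaving $\max(N) \le m-1$. For the lower bound, $\nu(N) = \nu(P) - n \ge \nu_m - n > 2^{m-1}-1 = \nu_{m-1}$, which forces $\max(N) \ge m-1$. Using the identity $\nu_m - \nu_{m-1} = 2^{m-1} = \nu_{m-1} + \nu_1$, I then perform the exchange
$$ P' = (P \setminus \{m\}) \cup \{m-1,\,1\}, \qquad N' = N \setminus \{m-1\}. $$
A direct check shows $\nu(P') - \nu(N') = n$ and $|P'| + |N'| = |P| + |N|$, so $(P',N')$ is again a minimal SABP; but $\nu(P') = \nu(P) - \nu_{m-1} < \nu(P)$, contradicting the minimality of $\nu(P)$. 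Hence $m \le 1 + \lfloor \log n\rfloor$, which completes (C).

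I expect the main obstacle to be pinning down $\max(N) = m-1$ precisely, since both inequalities are needed and the upper one relies essentially on condition (A); once this is in hand the exchange and the mass-decrease bookkeeping are routine. A minor point to verify is that the largest part $m$ occurs only once in $P$ (guaranteed by Lemma \ref{lemma:greedseq}(1), as it is the maximum), so that removing a single copy genuinely deletes $\nu_m$ from $\nu(P)$ and lowers the maximum.
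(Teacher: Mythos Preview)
Your argument is correct. It shares the paper's pivotal observation --- that when $(P,N)$ satisfies (A) and (B) with $m=\max(P)>1+\lfloor\log n\rfloor$, one must have $m-1\in N$ --- but packages the descent differently. The paper iterates: it alternates a greedification step with an exchange that replaces $m$ by $m{-}1$ in $P$ and compensates in $N$ by removing $m{-}1$ and splitting $\min(N)$ into two copies of $\min(N){-}1$, arguing termination because $\nu(P)$ drops by at least $2^{d}$ each round. You instead pick an extremal witness minimizing $\nu(P)$ and apply a single, slightly cleaner exchange $P\to(P\setminus\{m\})\cup\{m{-}1,1\}$, $N\to N\setminus\{m{-}1\}$ (using $\nu_m-\nu_{m-1}=\nu_{m-1}+\nu_1$) to reach an immediate contradiction. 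The extremal framing is tidier; the paper's iteration is more constructive.

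Two small points. First, after your exchange $(P',N')$ need not satisfy (B), so to contradict the choice of minimal $\nu(P)$ \emph{within} the class satisfying (A) and (B) you should re-greedify --- your parenthetical that ``greedification does not alter $\nu(P)$'' is exactly the fix, and it would read more cleanly to either state this explicitly or simply minimize $\nu(P)$ over all minimal SABPs from the outset. Second, your closing remark about $m$ being non-repeated in $P$ is not quite justified by Lemma~\ref{lemma:greedseq}(1) (which permits $P=\{m,m\}$), but it is also unnecessary: the exchange and the drop $\nu(P')=\nu(P)-\nu_{m-1}$ go through verbatim even if $m$ repeats, and in fact that configuration would force $\nu(N)>\nu_m$ and hence $m\in N$, already contradicting (A).
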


\begin{proof}
Let $d = \lfloor \log n \rfloor$ and let $(P,N)$ be an SABP of $n$.
We first claim that if   $(P,N)$ satisfies (A) and (B) and
if $\max(P) = d+1+c$ for some $c > 0$, then $d+c \in N$.
To see this first note that $\nu (P) > \nu (N)$
and since $P = \text Greedy(\nu(P))$ and $N= \text Greedy (\nu (N))$,
$\max (P) \ge \max (N)$.  Now if $d+c \notin N$,
 then in view of $(A)$ we can infer that $\max(N) \le d+c-1$.
By Lemma \ref {lemma:greedseq} (3), $\nu (N) \le 2^{d+c} - 2$. So

\begin{eqnarray}
n
&   = & (2^{d+c+1} -1) + \sum_{j \in P \setminus \{d+c+1\}} \nu_j - \nu (N)  \\
&   \ge  & (2^{d+c+1} -1) - 2^{d+c} + 2 > 2^{d+c} \ge 2^{ \lfloor \log n \rfloor + 1}
\end{eqnarray}

which is impossible.

We define the following two operations which operate on a SABP of $n$
and transform it into another SABP of $n$.

Operation 1:  Replace  $P$ by  $\text{Greedy}(\nu(P))$ and $N$ by
$\text{Greedy}(\nu(N))$. If the operand $(P,N)$ was
a minimal SABP of $n$, then clearly the new SABP also will be a minimal
SABP of $n$.

Operation 2:  For $(P,N)$ satisfying (A) and (B) and with
 $\max(P) = d+1+c$, for some $c > 0$ we define the following
operation: (Note that by the claim proved above, $d+c \in N$.) \\

\noindent
$P' \leftarrow (P \setminus \{d+c+1\} ) \cup \{ d+c \}$ \\
$N' \leftarrow (N \setminus \{ d+c, \min(N) \}) \cup \{ \min(N)-1,\min(N)-1 \}$ \\

It is easy to check that $\nu(P)-\nu(N) = \nu(P')-\nu(N')$ and that
$|P| = |P'|$. In the transformation for $N'$ the 0s are deleted if
$\min(N) = 1$, but we still have $|N'| \le |N|$.  Clearly if the operand
$(P,N)$ was  a minimal SABP of $n$, then
 $(P',N')$ also will be a minimal SABP of $n$.
We replace $(P,N)$ with $(P',N')$.

The transformation of a minimal SABP $(P,N)$ to a normal SABP is achieved
by the following procedure:  Since $\nu(P) \ge n$, 
if $(P,N)$ satisfies (B), and if   $\max (P) \notin \{ d, d+1 \}$,
then $\max (P) = d+ 1 + c$ for some $c > 0$, by Lemma \ref {lemma:greedseq}.

Step 1:   Apply operation 1 on $(P,N)$. If $\max (P) \in \{ d, d+1 \}$,  then
   stop
  and output $(P,N)$.

Step 2.  Apply operation 2
   on $(P,N)$ and go to step 1.

Note that for a minimal SABP, property (A) is trivially valid. It is
easy to verify that operation 2 can be applied on $(P,N)$ in step 2.
After each execution of step 1 and step 2, $(P,N)$ remains to be
a minimal SABP of $n$. Note that each time step 2 is executed,
 $\nu(P)$ reduces  by  $2^{d+c} > 2^d$. Since in
any minimal SABP $(P,N)$ of $n$, $\nu (P) \ge  n $, the procudure should
end after a finite number of steps. When the procedure ends, $(P,N)$
clearly satisfies properties (B) and (C).

\end{proof}

Note that condition (C) is not redundant. Although it is always true
that (when (B) is satisfied) $\max(P) \ge \lfloor \lg n \rfloor$, for a minimal SABP it is
not always the case that $\max(P) \le 1+ \lfloor \lg n \rfloor$. For
example, $5 = 15-7-3$ is a minimal SABP.

\section{Isoperimetric problems on $\mathcal{T}_\infty$}

\subsection{Ralation with Tanny and Conolly Sequences}

The first glimpse of the relationship between meta-Fibonacci sequences and
the discrete isoperimetric problem appreared in a
paper by Bharadwaj, Chandran and Das  \cite{BCD}, where they
related Tanny sequence with the connected edge isoperimetric sequence
of the infinite binary tree with all leaves at
the same level  ${\mathcal T}_{\infty}$.  
Though an independent proof was presented there, the result can also be obtained using
the combinatorial interpretation of Tanny sequences developed earlier
by Jackson and Ruskey \cite {JacksonRuskey}.  For a induced forest $F$ of ${\mathcal T}_{\infty}$,
we use $L(F)$ to denote the number of leaves of $F$ at the lowest level
of ${\mathcal T}_{\infty}$.

\begin{theorem} 
For all $n \ge 1$,
\begin{equation}
\delta_C(n) = n + 2 - 2 T(n).
\label{eq:CT}
\end{equation}
\end{theorem}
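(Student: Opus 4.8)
The plan is to reduce the isoperimetric quantity to a pure leaf-counting problem by an elementary degree count, and then identify that leaf count with the Tanny number. First I would observe that in $\mathcal{T}_\infty$ every vertex that is not a lowest-level leaf has degree $3$ (two children and one parent), while every lowest-level leaf has degree $1$. For any connected induced subtree $X$ with $|X| = n$ there are exactly $n-1$ internal edges, so summing degrees gives $\sum_{v \in X} \deg_{\mathcal{T}_\infty}(v) = 2(n-1) + |(X,\overline{X})|$. Writing $L(X)$ for the number of lowest-level leaves of $\mathcal{T}_\infty$ lying in $X$, the left side equals $3(n - L(X)) + L(X) = 3n - 2L(X)$, and therefore
\[
|(X,\overline{X})| = n + 2 - 2L(X).
\]
Minimizing over all connected $X$ of size $n$, this immediately yields $\delta_C(n) = n + 2 - 2\,\max_{X} L(X)$, so the theorem is equivalent to the clean combinatorial statement $\max_{X} L(X) = T(n)$, the maximum being taken over connected subtrees on $n$ vertices.

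Next I would attack the extremal leaf problem. A convenient reformulation is through the inverse function $m(\ell)$, the minimum number of vertices in a connected subtree containing $\ell$ lowest-level leaves; then $\max_X L(X) = \max\{\ell : m(\ell) \le n\}$. The quantity $m(\ell)$ is a minimum Steiner-tree size for $\ell$ chosen bottom leaves, which is minimized by clustering the leaves, and its value is governed by the binary expansion of $\ell$ (for instance $m(2^d) = 2^{d+1}-1 = \nu_{d+1}$, realized by a complete binary tree). To prove $\max_X L(X) = T(n)$ I would either invoke directly the combinatorial interpretation of the Tanny sequence due to Jackson and Ruskey \cite{JacksonRuskey}, which already identifies $T(n)$ with exactly this extremal leaf count, or give an independent induction following the recurrence defining $T$. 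For the latter I would fix an optimal $X$, look at its top vertex $r$ (the vertex nearest the root direction), and split $X \setminus \{r\}$ into the portions $X_L, X_R$ hanging below the two children, so that $|X_L| + |X_R| = n-1$ and $L(X) = L(X_L) + L(X_R)$, and then argue that the sizes of the two parts in an optimal configuration are precisely the shifted arguments $n-1-T(n-1)$ and $n-2-T(n-2)$ appearing in the Tanny recurrence.

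I expect the second paragraph to contain the main obstacle. The degree count is routine, but matching $\max_X L(X)$ to the Tanny recurrence is delicate for two reasons: first, one must justify by an exchange argument that an optimal subtree uses both children of its top vertex and splits its budget in the balanced way dictated by the recurrence; second, the two pieces $X_L, X_R$ live under a height bound (the level of the children), so the recursion is really for a height-constrained leaf maximum, and one must verify that for the relevant $n$ this constraint is not binding, so that the unconstrained quantity $\max_X L(X)$ satisfies the same recurrence with the same initial values $\max_X L(X) = 1$ for $n \in \{1,2\}$. Given the explicit pointer in the text to \cite{BCD} and to the Jackson--Ruskey interpretation \cite{JacksonRuskey}, the cleanest route is to cite their combinatorial interpretation of $T(n)$ and then let the identity $|(X,\overline{X})| = n + 2 - 2L(X)$ finish the proof; the inductive argument above is the fallback for a fully self-contained treatment.
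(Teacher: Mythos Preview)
Your proposal is correct and follows essentially the same approach as the paper: the paper performs exactly your degree count to obtain $|(S,\overline{S})| = n + 2 - 2L(S)$ and then cites Jackson--Ruskey \cite{JacksonRuskey} for the identification $T(n) = \max_{|S|=n} L(S)$. Your additional discussion of a self-contained inductive argument is extra and not pursued in the paper, which simply relies on the citation.
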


\begin{proof}
Let $S$ be a subtree of size $n$ of $\mathcal{T}_\infty$.
If $v$ is a vertex in a graph, then by $d(v)$ we denote the degree of $v$ 
in $\mathcal {T}_\infty$.
Note that

\[
\sum_{v \in S} d(v) = L(S) + 3(n-L(S)) = 3n - 2 L(S).
\]
On the other hand, because $S$ is a tree,
\[
\sum_{v \in S} d(v) = 2(n-1) + |(S,\overline{S})|.
\]
Observe that
\[
|(S,\overline{S})| = 3n - 2 L(S) - 2n + 2 = n + 2 - 2 L(S).
\]

Thus any subtree $S$ that maximizes $L(S)$
will be such that $|(S,\overline S)| = \delta_C(n)$.
In Jackson and Ruskey
\cite{JacksonRuskey} it is shown that
 $ T(n) = \max_{|S|=n}  L(S)$, where  $S$ is a subtree of
 ${\mathcal T}_{\infty}$.
\end{proof}

\begin{figure}
\includegraphics[width=5in]{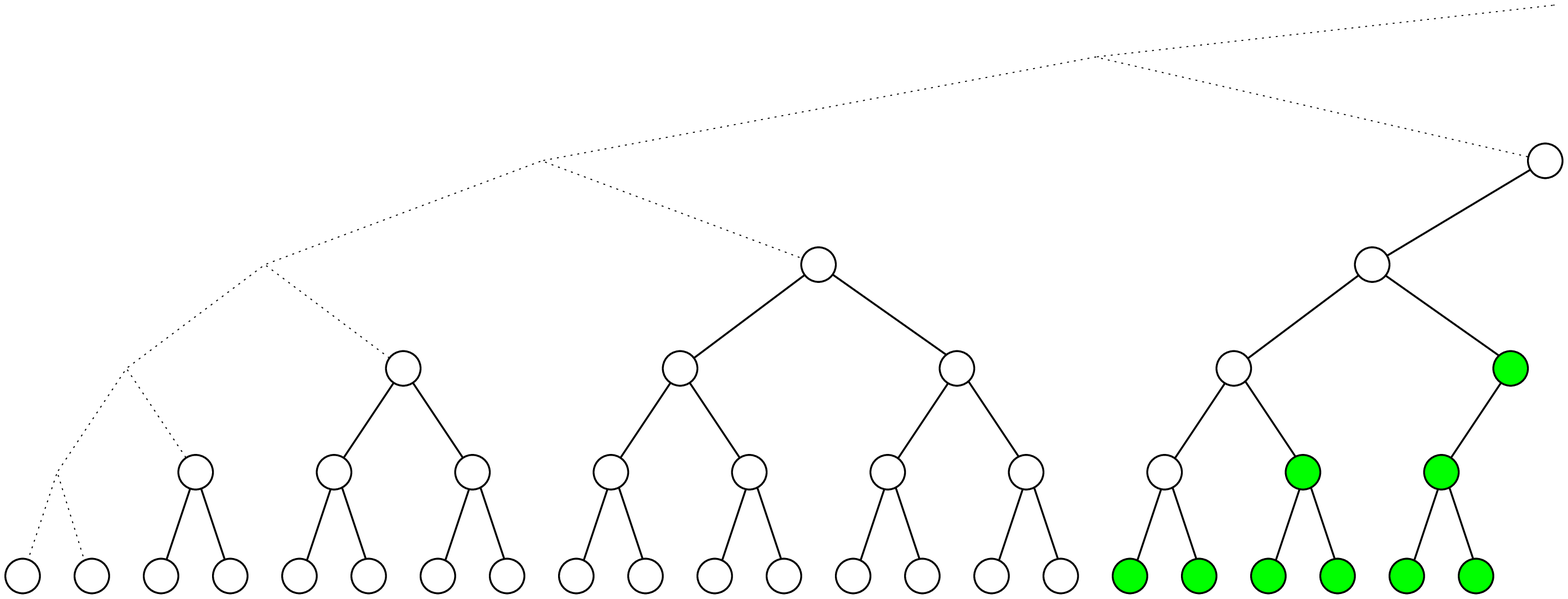}
\caption{The tree/forest $\mathcal{F}_0(40)$, showing the substructure
  $\mathcal{F}_0(9)$ as darkened nodes.}
\label{fig:treeC40}
\end{figure}

Our next aim is to get a similar relation between Conolly number $C(n)$
and the co-connected edge isoperimetric number $\delta_P(n)$. To do this
it is essential to establish that $\delta_P(n) = \tau_P(n)$.

\begin {definition}
{\bf The P-forest of  an ABP $(P, \emptyset)$:}  Let $(P, \emptyset)$ be
an ABP of $n$.  We
define the {\it P-forest} $F$  of $(P, \emptyset)$ to be  a forest induced in
 $\mathcal {T}_\infty$ as the disjoint union of $|P|$ complete binary trees,
such that for each $t$ in the multi-set $P$ we have a
tree of size $2^t - 1$ in the forest $F$  with its root at height $t$
from the leaf level, and having all their leaves at
the lowest level of $\mathcal {T}_\infty$. 
 Thus if $F$ is the  P-Forest of $(P,\emptyset)$,
$|F| = n$, $\overline F$ is connected in ${\mathcal T}_{\infty}$
 and $|(F,\overline F)| = |P|$.
\end {definition}

\begin{lemma}
\label {deltaPtauPlemma}
\begin{equation}
\delta_P(n) = \tau_P(n).
\end{equation}
\end{lemma}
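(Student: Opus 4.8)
The plan is to establish $\delta_P(n) = \tau_P(n)$ by exhibiting a size-preserving correspondence between almost binary partitions of $n$ and the subsets $X \subseteq V(\mathcal{T}_\infty)$ admissible for the co-connected isoperimetric problem, namely those with $|X| = n$ and $\overline{X}$ connected. The P-forest construction in the preceding definition already supplies one half of this correspondence, so I would prove the two inequalities separately.

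For $\delta_P(n) \le \tau_P(n)$, I would take a minimal ABP $(P,\emptyset)$ of $n$, so that $|P| = \tau_P(n)$, and pass to its P-forest $F$. By the properties recorded in the definition, $|F| = \nu(P) = n$, the complement $\overline{F}$ is connected, and $|(F,\overline{F})| = |P|$. Hence $F$ is an admissible configuration for the co-connected problem, which gives $\delta_P(n) \le |(F,\overline{F})| = |P| = \tau_P(n)$.

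For the reverse inequality $\tau_P(n) \le \delta_P(n)$, I would start from an optimal admissible $X$, so $|X| = n$, $\overline{X}$ connected, and $|(X,\overline{X})| = \delta_P(n)$, and recover an ABP from it. The key structural claim is that connectivity of $\overline{X}$ forces each connected component of the subgraph induced on $X$ to be a complete binary tree reaching down to the leaf level of $\mathcal{T}_\infty$; this is where the real content lies. Since $X$ is finite, $\overline{X}$ is cofinite and therefore contains the infinite upward part of the tree, and the single connected set $\overline{X}$ must contain it. If some $v \in X$ had a child $c \in \overline{X}$, then in the tree the only route from $c$ to the upward part passes through $c$'s parent $v \in X$, so the component of $\overline{X}$ containing $c$ would be trapped in the finite subtree below $c$ and could not meet the upward part, contradicting connectivity of $\overline{X}$. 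Thus every non-leaf vertex of $X$ has both children in $X$, so $X$ is downward closed and each component is a complete binary tree of size $2^{t}-1$ with root at some height $t$ and all leaves at the lowest level. Writing $P$ for the multiset of these heights $t$ yields an ABP of $n$ with $\nu(P) = |X| = n$ and $|P|$ equal to the number of components; since each such tree contributes exactly its single root-to-parent edge to the cut, $|(X,\overline{X})| = |P|$. Therefore $\tau_P(n) \le |P| = \delta_P(n)$.

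Combining the two inequalities gives $\delta_P(n) = \tau_P(n)$. The main obstacle is the structural claim in the second direction, that co-connectivity is equivalent to $X$ being a disjoint union of complete binary trees with all leaves at the lowest level; this was asserted to be ``easy to see'' in the introduction, and making it rigorous relies on the finiteness of $X$ together with the unique-path property of trees, as sketched above. Once it is in place, both the vertex count and the cut count are immediate, so no computation beyond $\nu(P) = \sum_{t \in P}(2^t-1)$ is required.
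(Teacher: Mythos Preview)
Your proof is correct and follows essentially the same two-inequality approach as the paper: use the P-forest of a minimal ABP to get $\delta_P(n)\le\tau_P(n)$, and read off an ABP from any admissible $X$ to get $\tau_P(n)\le\delta_P(n)$. You actually supply more detail than the paper on why connectivity of $\overline{X}$ forces $X$ to be downward closed (the paper just cites the ``easy to see'' remark from the introduction), and your justification via the unique-path property and finiteness of $X$ is sound.
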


\begin{proof}
Clearly every ABP $(P,\emptyset)$ of $n$ has  a P-forest $F
\subset \mathcal{T}_\infty$ such that $|F|
= n$, $\overline F$ inducing a connected
subgraph in ${\mathcal T}_{\infty}$
 and $|(F,\overline{F})| = |P|$. It follows that
$\delta_P(n) \le \tau_P(n)$.  Conversely,
any subset of vertices with $|S| = n$ and $\overline S$ connected
in ${\mathcal T}_{\infty}$, is such that   $S$
comprises of  a collection of  complete binary trees with all leaves
at the lowest  level  in $\mathcal{T}_\infty$. Such a subset $S$
 can be mapped into an ABP
$(P,\emptyset)$ by mapping each complete tree of size $\nu_j$ to an
integer $j \in P$; with the result that $|S| = n = \nu(P)$ and
$|(S,\overline{S})| = |P|$. Thus $\tau_P(n) \le \delta_P(n)$. The
Lemma follows.
\end{proof}

We denote by $L_P(n) = L(F)$ where $F$ is the P-forest of the ABP
$(\text Greedy(n), \emptyset)$. We will first prove $L_P(n) = C(n)$.
For this we need a result from  \cite{JacksonRuskey}, to state which
we need the following notions.

Let ${\mathcal F}_{\infty}$ be the infinite forest consisting of the
infinite sequence of complete binary trees $B_0, B_1, B_2, \ldots$, where
for $i \ge 1$, $B_i$ is the complete binary tree of depth $i$, and $B_0$ is
the single vertex tree.  (Depth of a complete binary tree is the number
of nodes in the path from the root to one of its leaves. Note that for $i \ge 1$,  $B_i$ contains
$\nu_i$ vertices. Thus $B_1$ is also a single vertex tree.)
Note that ${\mathcal F}_{\infty}$ can be seen as
an induced forest of ${\mathcal T}_{\infty}$. It is obtained when we
remove the (infinite) path from the parent of the first leaf
of ${\mathcal T}_{\infty}$ to the root of ${\mathcal T}_{\infty}$.
 (See Figure  \ref{fig:treeC40}: What should be removed from
$\mathcal {T}_\infty$ to get ${\mathcal F}_\infty$ is shown using dotted  lines.) In the rest of this sectin, when we
mention $\mathcal {F}_{\infty}$ we would be refering to this
induced forest of $\mathcal {T}_{\infty}$.  Also  the complete binary
tree $B_i$ will always refer to some induced complete binary tree of
depth $i$ in $\mathcal {T}_{\infty}$, with its root at the $i$th level
of $\mathcal {T}_{\infty}$ and all its leaves at the lowest level of $\mathcal {T}_\infty$.

 The vertices of ${\mathcal F}_{\infty}$
are numbered as follows: If $u \in B_i$ and $v \in B_j$ with $i < j$,
then $u$ is given a smaller number than $v$. The vertices within $B_i$
are numbered in the pre-order i.e.,  each vertex in $B_i$ is given  a smaller
number than the number given to any of its descendant and the left
subtree is numbered before the right subtree. 
We denote by ${\mathcal F} (n)$, the subforest of ${\mathcal F}_{\infty}$
induced by the first $n$ vertices with respect  to this numbering.
The following result is from
 \cite{JacksonRuskey}.

\begin {lemma}
[\cite{JacksonRuskey}]  $L({\mathcal F} (n) ) = C(n)$.
 \label {lemma:conollyleaf}
\end {lemma}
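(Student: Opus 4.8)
The plan is to prove the sharper, position-level statement that, in the vertex numbering of $\mathcal{F}_\infty$, exactly $C(n)$ of the first $n$ vertices lie at the lowest level of $\mathcal{T}_\infty$, and then read off the lemma. The first observation is that every vertex at the lowest level of $\mathcal{T}_\infty$ has no descendants there, so it is automatically a leaf of any induced subforest containing it; since $L$ counts precisely the lowest-level leaves, $L(\mathcal{F}(n))$ equals the number of the first $n$ vertices that sit at the lowest level. Writing $b_k = 1$ if the $k$-th vertex is at the lowest level and $b_k = 0$ otherwise, we get $L(\mathcal{F}(n)) = \sum_{k=1}^{n} b_k$, and hence $\sum_{n \ge 1} L(\mathcal{F}(n))\, z^n = \frac{1}{1-z} B(z)$, where $B(z) = \sum_{k \ge 1} b_k z^k$ marks the positions of the lowest-level vertices. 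By the product form of $C(z)$ in (\ref{eq:gfTC}), it therefore suffices to establish the identity $B(z) = z \prod_{j \ge 1}(1 + z^{\nu_j})$.

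To compute $B(z)$ I would split the sum over the trees $B_0, B_1, B_2, \ldots$ composing $\mathcal{F}_\infty$. Let $c_i$ be the number of vertices preceding $B_i$; summing the sizes $|B_j| = \nu_j$ gives $c_i = 2^i - i$ for $i \ge 1$ (and $c_0 = 0$). Within one tree $B_i$, numbered in pre-order with its root at local position $1$, let $p_i(z)$ record the positions of its lowest-level leaves. Splitting $B_i$ into its root followed by its left and right copies of $B_{i-1}$, which occupy local positions offset by $1$ and by $1 + \nu_{i-1}$, yields the recursion $p_i(z) = z\,(1 + z^{\nu_{i-1}})\, p_{i-1}(z)$ with base case $p_1(z) = z$, so that $p_i(z) = z^{i} \prod_{j=1}^{i-1}(1 + z^{\nu_j})$ for $i \ge 1$. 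The contribution of $B_i$ to $B(z)$ is then $z^{c_i} p_i(z) = z^{2^i}\prod_{j=1}^{i-1}(1 + z^{\nu_j})$, using $c_i + i = 2^i$; a direct check shows the two degenerate single-vertex trees $B_0$ and $B_1$ each contribute $z$ and $z^2$ respectively, matching this same formula with the empty product equal to $1$. Hence $B(z) = \sum_{i \ge 0} z^{2^i} \prod_{j=1}^{i-1}(1 + z^{\nu_j})$.

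The final step is a telescoping argument. Writing $Q_m = \prod_{j=1}^{m}(1 + z^{\nu_j})$ with $Q_0 = 1$, the factorization $Q_i - Q_{i-1} = z^{\nu_i} Q_{i-1}$ together with $z^{2^i} = z \cdot z^{\nu_i}$ gives $z^{2^i}\prod_{j=1}^{i-1}(1+z^{\nu_j}) = z\,(Q_i - Q_{i-1})$ for every $i \ge 1$; summing from $i = 1$ to $M$ collapses to $z\,(Q_M - 1)$, and adding the $i = 0$ term (equal to $z$) leaves $z\,Q_M$, so letting $M \to \infty$ yields $B(z) = z \prod_{j \ge 1}(1 + z^{\nu_j})$. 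Combined with $\sum_n L(\mathcal{F}(n)) z^n = \frac{1}{1-z}B(z)$ this matches the generating function $C(z)$ of (\ref{eq:gfTC}), and comparing coefficients gives $L(\mathcal{F}(n)) = C(n)$. I expect the only real obstacle to be bookkeeping: fixing the offsets $c_i = 2^i - i$, correctly deriving the within-tree pre-order leaf generating function $p_i(z)$, and reconciling the two anomalous single-vertex trees $B_0$ and $B_1$ with the general formula. Once these are settled, the telescoping identity closes the argument cleanly.
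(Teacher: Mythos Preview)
The paper does not prove this lemma; it is quoted verbatim from Jackson--Ruskey \cite{JacksonRuskey} with no argument supplied, so there is no in-paper proof to compare against. Your generating-function proof is correct and self-contained: you identify $L(\mathcal F(n))$ with the partial sum $\sum_{k\le n} b_k$ of the lowest-level indicator, compute the position series $B(z)$ tree by tree via the pre-order recursion $p_i(z)=z(1+z^{\nu_{i-1}})p_{i-1}(z)$, and then collapse $\sum_{i\ge 0} z^{2^i}\prod_{j=1}^{i-1}(1+z^{\nu_j})$ by the telescope $z^{2^i}Q_{i-1}=z(Q_i-Q_{i-1})$ to get $B(z)=z\prod_{j\ge 1}(1+z^{\nu_j})$, whence $\sum_n L(\mathcal F(n))\,z^n=\frac{1}{1-z}B(z)=C(z)$. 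The offsets $c_i=2^i-i$ and the handling of the degenerate trees $B_0,B_1$ are all correct.

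One cosmetic point worth flagging rather than silently fixing: the product in \eqref{eq:gfTC} as printed runs over $n\ge 0$, which taken literally inserts a spurious factor $1+z^{0}=2$; your identity uses $j\ge 1$, which is the correct range (and is what the Conolly values in Table~\ref{table:numbers} confirm). Otherwise the argument is clean and arguably more transparent than the recurrence-verification route one would expect from the original source.
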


A pre-order prefix of $B_k$ having $x$ nodes, denoted
as $PP(x, B_k)$ is defined as the sub-tree of $B_k$ formed by the first
$x$ nodes visited when a pre-order traversal of $B_k$ starting from
the root is done.
It is easy to verify that for $x' \leq x$, $PP(x',B_k)$ is contained in
$PP(x, B_k)$. Note that $PP(k-1, B_k)$  is the path from the root
of $B_k$ to the parent of the left most leaf of $B_k$.
This path is called the {\it primary path} of $B_k$. The
following lemma is easy to verify.

\begin{lemma}\label{lem:helpinglemma}
Let  $x \geq k-1$. Then 
$L(PP(x,B_k)) =  L(\mathcal{F}(x-(k-1)))$.
\end{lemma}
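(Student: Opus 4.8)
The plan is to prove the identity by setting up an $L$-preserving, position-by-position correspondence between the vertices of $PP(x,B_k)$ that are visited after the primary path and the first $x-(k-1)$ vertices of $\mathcal{F}_\infty$. Since $L$ merely counts leaves at the lowest level of $\mathcal{T}_\infty$, it is enough to show that, for every $m$, the $m$-th vertex of the pre-order of $B_k$ following its primary path is such a leaf if and only if the $m$-th vertex in the numbering of $\mathcal{F}_\infty$ is; taking $m=x-(k-1)$ then gives the claim.

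First I would fix the shape of the pre-order of $B_k$. Write the primary path as $v_k,v_{k-1},\ldots,v_2$, where $v_j$ is the vertex at height $j$ above the leaf level, so that $v_2$ is the parent of the leftmost leaf. These are precisely the first $k-1$ vertices of the traversal, and none of them lies at the lowest level, so they contribute nothing to $L$. This settles the base case $x=k-1$ at once, since then $L(PP(k-1,B_k))=0=L(\mathcal{F}(0))$.

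Next I would read off what pre-order visits after the primary path. Having reached $v_2$, the traversal visits its left child, the leftmost leaf $\ell$, and then, backtracking, it processes for each $j=2,3,\ldots,k$ the subtree hanging off $v_j$ as its right child, each in pre-order. Since $v_j$ sits at height $j$, this right subtree is an induced complete binary tree of depth $j-1$ with all its leaves at the lowest level, i.e. a copy of $B_{j-1}$. Hence the suffix of the traversal is, in order, $\ell,B_1,B_2,\ldots,B_{k-1}$, each block traversed internally in pre-order. This matches the numbering of $\mathcal{F}_\infty=B_0,B_1,B_2,\ldots$ block by block: the single leaf $\ell$ pairs with $B_0$ (also a lone lowest-level vertex), and the hanging copy of $B_{j-1}$ pairs with the block $B_{j-1}$ of $\mathcal{F}_\infty$.

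Because corresponding blocks are isomorphic as rooted trees and in both settings have all their leaves at the lowest level of $\mathcal{T}_\infty$, their internal pre-orders produce lowest-level leaves in the same positions; together with the pairing $\ell\leftrightarrow B_0$ this yields the desired $L$-preserving correspondence and hence the identity. The argument is essentially a clean bijection, so no step is deep; the one place to be careful is the height bookkeeping — confirming that the right child of $v_j$ roots a full $B_{j-1}$ whose leaves really are at the lowest level (so that its leaf positions line up with the $B_{j-1}$ block of $\mathcal{F}_\infty$), and handling the off-by-one so that the lone leaf $\ell$ is matched with $B_0$ rather than being absorbed into $B_1$ or dropped.
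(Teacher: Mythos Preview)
Your proof is correct and follows essentially the same approach as the paper: remove the primary path (which contributes nothing to $L$) and observe that what remains matches $\mathcal{F}_\infty$ block by block. The paper's own argument is a two-line version of yours that asserts $L(F')=L(\mathcal{F}(x-(k-1)))$ without spelling out the block decomposition $\ell,B_1,\ldots,B_{k-1}\leftrightarrow B_0,B_1,\ldots$; you have simply made that identification explicit.
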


\begin {proof}
Let $F'$ is the forest obtained by removing
$PP(k-1, B_k)$ from $PP(x, B_k)$. Then 
$L(PP(x,B_k)) = L(F') = L(\mathcal{F}(x-(k-1)))$.
\end {proof}

\begin{lemma}
For all $n \ge 1$,
\[
L_P(n) =  L( \mathcal {F} (n) ) .
\]
\label{lem:GreedConolly}
\end{lemma}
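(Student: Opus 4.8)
The plan is to prove the identity by strong induction on $n$, reducing it to a self-contained structural statement about the forest $\mathcal{F}_\infty$. Set $d = \lfloor \log(n+1)\rfloor$. By Lemma~\ref{lemma:greedseq} the greedy algorithm first selects $\nu_d$ and $\text{Greedy}(n) = \{\nu_d\} \cup \text{Greedy}(n-\nu_d)$; hence the $P$-forest of $\text{Greedy}(n)$ is the disjoint union of one copy of $B_d$, which contributes $2^{d-1}$ leaves at the lowest level, with the $P$-forest of $\text{Greedy}(n-\nu_d)$. This gives the recurrence
\[
L_P(n) = 2^{d-1} + L_P(n-\nu_d).
\]
Applying the induction hypothesis to $n-\nu_d < n$, it then suffices to establish the matching identity for $\mathcal{F}$:
\[
L(\mathcal{F}(n)) = 2^{d-1} + L(\mathcal{F}(n-\nu_d)). \qquad (\star)
\]
Note that $(\star)$ is purely a statement about $\mathcal{F}_\infty$ and needs no induction of its own, so the whole argument is: greedy recurrence, then the induction hypothesis, then $(\star)$.

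First I would record the elementary bookkeeping for $\mathcal{F}_\infty$: the trees $B_0,\dots,B_{k-1}$ together occupy the first $2^k-k$ vertices and contribute exactly $2^{k-1}$ lowest-level leaves, while the primary path $PP(k-1,B_k)$ contains no lowest-level leaf at all. Since $\nu_d \le n \le 2\nu_d$, the proof of $(\star)$ splits according to where the $n$-th vertex falls. In the first case, $\nu_d \le n \le 2^{d+1}-d-1$, the trees $B_0,\dots,B_{d-1}$ are full and $B_d$ is filled with $x = n-(2^d-d) \ge d-1$ vertices; then $L(\mathcal{F}(n)) = 2^{d-1} + L(PP(x,B_d))$, and Lemma~\ref{lem:helpinglemma} identifies $L(PP(x,B_d)) = L(\mathcal{F}(x-(d-1))) = L(\mathcal{F}(n-\nu_d))$, which is exactly $(\star)$.

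The remaining case is the narrow ``gap'' range $2^{d+1}-d-1 < n \le 2\nu_d$, where $\mathcal{F}(n)$ has already filled $B_0,\dots,B_d$ and entered the primary path of $B_{d+1}$ without reaching any new leaf, so $L(\mathcal{F}(n)) = 2^d$. Here I would check, by the same vertex count, that $n-\nu_d$ lies in $(2^d-d,\nu_d]$, so that $\mathcal{F}(n-\nu_d)$ has filled $B_0,\dots,B_{d-1}$ and sits inside the primary path of $B_d$, giving $L(\mathcal{F}(n-\nu_d)) = 2^{d-1}$; then $(\star)$ reads $2^d = 2^{d-1}+2^{d-1}$. I expect this case analysis to be the main obstacle: the subtlety is precisely the mismatch between the range of $n$ for which greedy selects $B_d$, namely $[\nu_d,2\nu_d]$, and the strictly smaller range for which $B_d$ is the last tree touched by the vertex-numbering of $\mathcal{F}_\infty$. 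Identifying this gap and verifying that both sides of $(\star)$ stay constant across it is where the real care is needed. The base cases $n=1,2$ are immediate.
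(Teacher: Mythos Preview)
Your proof is correct and follows essentially the same approach as the paper's own argument. The paper parameterizes by the index $k$ of the tree currently being filled in $\mathcal{F}_\infty$ (writing $n = 2^k - k + x$ with $0 \le x < \nu_k$) and splits into the cases $x < k-1$ versus $x \ge k-1$, while you parameterize by the greedy choice $d = \lfloor \log(n+1)\rfloor$ and split into $n \le 2^{d+1}-d-1$ versus the ``gap'' range; these are the same two cases viewed from opposite sides, and both arguments hinge on Lemma~\ref{lem:helpinglemma} in the identical way.
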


\begin{proof}
The proof is by induction on $n$.
For  $n=1,2$ etc,  it is easy to check that the Lemma holds.
Suppose that $L_P(j) = L(\mathcal {F} (j))$ for all $j < n$.

Clearly, there exists a unique positive integer $k$ such that,
$n =  1+ \sum_{1 \le i \le k-1} \nu_i  + x$, where $0 \le x < \nu_k$.
Let $B^t = \bigcup_{0 \le i \le t} B_i$. Clearly $ \mathcal {F} (n)
= B^{k-1} \cup PP(x, B_k)$.
Note that $n = 2^k - k + x$. We consider two cases based on how
$x$ compares with $k-1$.

\noindent{\bf Case I.} When $x < k - 1$.

Since $\nu_{k-1}  \le n = 2^k - k + x < 2^k - 1 = \nu_k$,
 the greedy algorithm will first
select $\nu_{k-1}$, and thus the corresponding P-forest will contain the
complete binary tree $B_{k-1}$.
Therefore we get the following:
\begin{eqnarray}
\label {Pforestequation}
L_P(n)
& = & L(B_{k-1})  + L_P(  n- \nu_{k-1}  )
\end{eqnarray}

On the other hand since $\mathcal{F}(n) = B^{k-1} \cup PP(x, B_k) =
B_{k-1} \cup B^{k-2} \cup PP(x,B_k)$, we have
$L(\mathcal {F} (n)) = L(B_{k-1}) + L( B^{k-2} ) + L( PP(x, B_k) )$.
 But note that $L(PP(x, B_k) = 0 = L(PP(x, B_{k-1})$ since $x < k-1$.
Thus $  L(\mathcal {F} (n) ) = L(B_{k-1}) + L( B^{k-2} ) + L( PP(x, B_{k-1}) )
 = L (B_{k-1}) + L( \mathcal {F} (n - \nu_{k-1} ) )$ since
$\mathcal {F} (n - \nu_{k-1} ) = B^{k-2} \cup PP(x, B_{k-1} )$.
Now, by induction hypothesis we have  $L( \mathcal {F} (n - \nu_{k-1} ) )
 = L_p ( n - \nu_{k-1} )$.
 It follows from Equation \ref {Pforestequation}
  that $L_p(n) = L( \mathcal {F} (n)$.

\noindent{\bf Case II.} When  $x \geq k - 1$.

Since $\nu_{k+1} >  n = 2^k-k+x \ge 2^k-1 = \nu_k$,  the
greedy algorithm picks up $\nu_k$ first, and thus the corresponding
P-forest contains $B_k$. Therefore,

\begin{eqnarray}
\label {Pforesteequation2}
L_P(n)
& = & L(B_k) + L_P(x-(k-1))
 =  2^{k-1} + L_P( x-(k-1) )
\end{eqnarray}

On the other hand $L(\mathcal {F} (n)) = L(B^{k-1}) + L(PP(x, B_k)$.
We note that $L(B^{k-1} ) = 1 + \sum_{1 \le i \le k-1} 2^{i-1} = 2^{k-1}$.
Recalling that by Lemma \ref {lem:helpinglemma}, $L(PP(x,B_k)) = L(\mathcal {F} (x - (k-1) )$,
we get $L (\mathcal {F} (n) ) = 2^{k-1} +   L(\mathcal {F} (x - (k-1) ))$.
By induction hypothesis we have
$L(\mathcal {F} (x - (k-1) ) = L_P (x - (k-1))$. It follows
from Equation \ref {Pforesteequation2}  that $L_P(n)
=  L(\mathcal {F} (n) )$.

\end{proof}

\begin {corollary}
\label {cor:leafcorollary}
  $$ L_P (n) = C(n)$$

\end {corollary}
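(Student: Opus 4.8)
The plan is to obtain this corollary by simply chaining together the two results immediately preceding it, since all the substantive work has already been done. Lemma \ref{lem:GreedConolly} establishes that $L_P(n) = L(\mathcal{F}(n))$ for all $n \ge 1$, identifying the number of lowest-level leaves in the P-forest of the greedy ABP of $n$ with the number of lowest-level leaves in the prefix forest $\mathcal{F}(n)$. Lemma \ref{lemma:conollyleaf}, quoted from Jackson and Ruskey \cite{JacksonRuskey}, asserts that $L(\mathcal{F}(n)) = C(n)$. Composing these two equalities yields
\[
L_P(n) \ = \ L(\mathcal{F}(n)) \ = \ C(n),
\]
which is exactly the claim.

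Because the corollary is a one-line consequence of two prior lemmas, there is essentially no genuine obstacle remaining at this stage; the difficulty was front-loaded into the proof of Lemma \ref{lem:GreedConolly}. That proof was the crux: it required an inductive argument splitting on whether the residual index $x$ (in the decomposition $n = 1 + \sum_{1 \le i \le k-1}\nu_i + x$ with $0 \le x < \nu_k$) is smaller than $k-1$ or not, matching the greedy algorithm's choice of $\nu_{k-1}$ versus $\nu_k$ against the structure $\mathcal{F}(n) = B^{k-1} \cup PP(x,B_k)$, and invoking Lemma \ref{lem:helpinglemma} to reconcile the primary-path prefix with a smaller instance $\mathcal{F}(x-(k-1))$. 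Once that correspondence between the greedy/partition side and the canonical prefix-forest side is in hand, and once the known combinatorial interpretation of the Conolly sequence via $\mathcal{F}$ is imported, the corollary follows with no further computation. I would therefore present it as a short \emph{Proof} that merely cites Lemma \ref{lem:GreedConolly} and Lemma \ref{lemma:conollyleaf} and combines their equalities.
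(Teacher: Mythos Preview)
Your proposal is correct and matches the paper's own proof exactly: the corollary is obtained by combining Lemma~\ref{lem:GreedConolly} ($L_P(n)=L(\mathcal{F}(n))$) with Lemma~\ref{lemma:conollyleaf} ($L(\mathcal{F}(n))=C(n)$). There is nothing to add.
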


\begin {proof}
Follows from Lemma \ref {lem:GreedConolly} and Lemma \ref  {lemma:conollyleaf}.
\end {proof}

\begin{theorem}
\begin{equation}
\delta_P(n) = 2 C(n) - n.
\label{eq:GC}
\end{equation}
\end{theorem}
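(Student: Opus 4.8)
The plan is to run the same handshake/degree-counting argument that produced the formula $\delta_C(n) = n + 2 - 2T(n)$, but applied to a \emph{forest} rather than to a single connected subtree, and then to substitute the two structural facts already established: $\delta_P(n) = \tau_P(n)$ (Lemma~\ref{deltaPtauPlemma}) and $L_P(n) = C(n)$ (Corollary~\ref{cor:leafcorollary}).

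First I would fix a distinguished witness. By Lemma~\ref{lemma:greedy} the greedy ABP $(\text{Greedy}(n),\emptyset)$ is minimal, so the number of parts $k := |\text{Greedy}(n)|$ equals $\tau_P(n)$. Let $F$ be its P-forest. By the definition of a P-forest, $F$ is a disjoint union of $k$ complete binary trees sitting inside $\mathcal{T}_\infty$; it has $|F| = n$ vertices, all of its leaves lie at the lowest level, $\overline{F}$ is connected, and $|(F,\overline{F})| = k$. Moreover $L(F) = L_P(n) = C(n)$ by Corollary~\ref{cor:leafcorollary}.

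Next I would sum the degrees in two ways. Since each lowest-level leaf has degree $1$ in $\mathcal{T}_\infty$ and every other vertex of $F$ has degree $3$, we get $\sum_{v \in F} d(v) = L(F) + 3(n - L(F)) = 3n - 2L(F)$. On the other hand, $F$ is a forest with $k$ components, hence it has $n - k$ internal edges, so the same degree sum equals $2(n-k) + |(F,\overline{F})|$. Equating the two expressions and using $|(F,\overline{F})| = k$ gives $k = n + 2k - 2L(F)$, that is, $k = 2L(F) - n$. Substituting $k = \tau_P(n) = \delta_P(n)$ and $L(F) = C(n)$ yields $\delta_P(n) = 2C(n) - n$, as claimed.

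There is no deep obstacle here, since the genuinely hard work is already packaged in the earlier lemmas (the identification $\delta_P = \tau_P$ and the leaf count $L_P = C$); what remains is purely bookkeeping. The two points demanding care are that the handshake count for a forest must use $n - k$ internal edges rather than the $n - 1$ valid for a single tree, and that the cut of the P-forest must be checked to have size exactly $|P| = k$, since each complete-binary-tree component of $F$ contributes precisely one cut edge, namely the one joining its root to its parent in $\mathcal{T}_\infty$. Once these two counts are pinned down, the linear relation in $k$ collapses immediately to the desired identity.
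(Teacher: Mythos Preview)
Your proof is correct and follows essentially the same route as the paper: take the P-forest of a minimal ABP of $n$, compute $\sum_{v\in F} d(v)$ both as $3n-2L(F)$ and as $2(n-k)+k$, and then substitute $k=\tau_P(n)=\delta_P(n)$ and $L(F)=L_P(n)=C(n)$ from the earlier results. The only cosmetic difference is that you explicitly pin down the greedy ABP as the witness, whereas the paper just speaks of ``a minimal ABP,'' but the bookkeeping and the invoked lemmas are identical.
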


\begin{proof}
In  $\mathcal {T}_{\infty}$,  every node is either a leaf or has
two children. Let $S$ be a subset of vertices of
$\mathcal {T}_{\infty}$ inducing   a  P-forest corresponding
to a minimal ABP of $n$. Clearly $\delta_P(n) = c$, the number
of trees in the forest induced by $S$. Clearly
$L(S) + 3(n-L(S)) = \sum_{v \in S} d(v) = 2(n-c) + c= 2n- \delta_P(n)$.
From this, it is easy to
see that $ L_p(n) = L(S) = (n+\delta_P(n))/2$.
Using Corollary \ref{cor:leafcorollary}, we obtain $\delta_P(n) = 2
C(n) - n$, as desired.
\end{proof}

The following Theorem  was  conjectured to be true by Jeorg
Arndt \cite{Arndt} (see OEIS A100661).

\begin{theorem}
The generating function of $\delta_P(z)$ is
\[
\frac{z}{1-z} \left( 2 \prod_{k \ge 1}(1+z^{2^k-1}) - (1-z) \right)
\]
\end{theorem}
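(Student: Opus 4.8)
The plan is to promote the pointwise identity $\delta_P(n)=2C(n)-n$, just established in (\ref{eq:GC}), to an identity of formal power series. Each coefficient of the sought generating function is a fixed linear combination of a Conolly number and of $n$, so the whole computation reduces to two already-available ingredients: the ordinary generating function of the Conolly sequence from (\ref{eq:gfTC}), and the elementary generating function $\sum_{n\ge1}nz^n = z/(1-z)^2$ of the identity sequence.

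Concretely, I would set $\delta_P(z)=\sum_{n\ge1}\delta_P(n)z^n$ and substitute $\delta_P(n)=2C(n)-n$ coefficientwise to get
\[
\delta_P(z) \;=\; 2\sum_{n\ge1}C(n)\,z^n \;-\; \sum_{n\ge1}n\,z^n \;=\; 2\,C(z)\;-\;\frac{z}{(1-z)^2}.
\]
Substituting the Conolly closed form $C(z)=\frac{z}{1-z}\prod_{k\ge1}(1+z^{2^k-1})$ then gives
\[
\delta_P(z) \;=\; \frac{2z}{1-z}\prod_{k\ge1}\bigl(1+z^{2^k-1}\bigr)\;-\;\frac{z}{(1-z)^2},
\]
and factoring the common $z/(1-z)$ out of both summands puts this into the closed form claimed in the theorem.

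The argument is essentially mechanical once (\ref{eq:GC}) is available, so the interest lies not in any single hard step but in the bookkeeping at the two ends of the calculation. On the product side, the range of the infinite product must be the one for which $C(z)$ really equals $\sum_{n\ge1}C(n)z^n$: the hypothetical $k=0$ factor is $1+z^{0}=2$, which would double every coefficient, so the product is taken over $k\ge1$, exactly as it appears in the statement. On the other side one should follow the lowest-order terms to check that the two series combine to start at $z^1$ with $\delta_P(1)=1$ and then reproduce $2,1,2,3,\dots$ from Table \ref{table:numbers}; this confirms that the constant terms cancel and that no spurious $z^0$ contribution survives.
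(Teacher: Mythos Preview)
Your approach is exactly the paper's: its entire proof is the single sentence ``This follows from (\ref{eq:GC}) and the known generating function (\ref{eq:gfTC}) for $C(n)$,'' and you have spelled out precisely that computation, namely $\delta_P(z)=2C(z)-z/(1-z)^2$ followed by substitution of the Conolly product.

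One point deserves care in your last step. After factoring $z/(1-z)$ out of
\[
\frac{2z}{1-z}\prod_{k\ge1}\bigl(1+z^{2^k-1}\bigr)\;-\;\frac{z}{(1-z)^2}
\]
you actually obtain
\[
\frac{z}{1-z}\left(2\prod_{k\ge1}\bigl(1+z^{2^k-1}\bigr)\;-\;\frac{1}{1-z}\right),
\]
with $-\,1/(1-z)$ rather than the $-(1-z)$ printed in the statement. A quick coefficient check confirms that your form is the right one: the displayed formula would give coefficient $4$ at $z^2$, whereas $\delta_P(2)=2$. So the discrepancy is a typographical slip in the stated theorem, not a flaw in your reasoning---but your sentence ``factoring \ldots\ puts this into the closed form claimed in the theorem'' glosses over exactly the spot where the mismatch lives. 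Carrying out that one line of algebra explicitly (or running the low-order check you describe against the \emph{printed} formula rather than your own) would have exposed it.
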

\begin{proof}
This follows from (\ref{eq:GC}) and the known generating function
(\ref{eq:gfTC}) for $C(n)$.
\end{proof}

{
\subsection{Relation with SABP, ABP and CABP}

\label {deltaTausection}
}

In this section we show that  $\delta(n) = \tau(n)$ and $\delta_C(n)
= \tau_C(n)$, among other things.

\subsubsection{To prove $\delta(n) \ge \tau(n)$}

Let $S$ be a set of vertices of $\mathcal{T}_\infty$, with $|S|=n$.
We will show that $|(S,\overline{S})|$ can be expressed as the number of
  parts in a SABP of $n$.
Define a function $f_S : V(\mathcal{T}_\infty) \rightarrow \mathbb{N}$ as follows.
Let $\ell(v)$ denote the level number of $v$ in $\mathcal {T}_{\infty}$. If $v$ is
a leaf of $\mathcal {T}_\infty$, we take $\ell (v) = 1$.
\[
f_S(v) = \begin{cases}
\nu(\ell(v)) & \text{ if } v \in S \text{ and } \mathrm{par}(v) \not\in S, \\
-\nu(\ell(v)) & \text{ if } v \notin S \text{ and } \mathrm{par}(v) \in S, \\
0 & \text{ otherwise.}
\end{cases}
\]

\noindent (See figure \ref {fig:BT3}, where we have illustrated the function
$f_S$ for a subset $S$ with $|S|=24$.)

\begin{theorem}
For any subset $S$ of $V(\mathcal{T}_\infty)$, with $|S| = n$, we have:
\[
n = \sum_{v \in V(\mathcal{T}_\infty)} f_S(v) \;\;\; \mbox{ and } \;\;\;
\]
\begin{equation}
|(S,\overline{S})| = |\{ v \in V(\mathcal{T}_\infty) : f_S(v) \neq 0 \} |.
\label{eq:cutval}
\end{equation}
If $S$ is connected, then there is exactly one positive term in
(\ref{eq:cutval}). \label{thm:inout}
\end{theorem}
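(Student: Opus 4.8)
The plan is to prove the three assertions in turn, disposing of the cut-size identity (\ref{eq:cutval}) and the connectedness claim quickly from the definition of $f_S$, and spending the real effort on the counting identity $n = \sum_v f_S(v)$. First I would note that $S$ is finite, so $f_S(v) \ne 0$ for only finitely many $v$ (namely those with $v$ or $\mathrm{par}(v)$ in $S$), and every sum below is effectively finite. For (\ref{eq:cutval}) I would observe that $f_S(v) \ne 0$ holds exactly when $v$ and its parent $\mathrm{par}(v)$ lie on opposite sides of the partition $(S,\overline S)$, i.e. precisely when the tree edge joining $v$ to $\mathrm{par}(v)$ is a cut edge. Since every edge of $\mathcal{T}_\infty$ joins a unique vertex to its unique parent, sending a cut edge to its lower endpoint $v$ is a bijection between $(S,\overline S)$ and $\{v : f_S(v) \ne 0\}$, giving the claimed equality.

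The heart of the proof is $n = \sum_v f_S(v)$, and the key observation I would use is that $\nu(\ell(v)) = 2^{\ell(v)}-1$ is exactly the number of vertices of the complete binary subtree $T_v$ of $\mathcal{T}_\infty$ rooted at $v$ and reaching down to the leaf level. Writing $f_S(v) = \sigma(v)\,|T_v|$ with $\sigma(v) \in \{+1,-1,0\}$ the sign prescribed by the definition, I would expand $|T_v| = \sum_{u \in T_v} 1$ and interchange the order of summation to obtain $\sum_v f_S(v) = \sum_u \sum_{v : u \in T_v} \sigma(v)$. For a fixed $u$ the inner sum runs over all ancestors $v$ of $u$ (including $u$ itself), forming the upward path $u = v_0, v_1 = \mathrm{par}(u), v_2, \dots$; checking the three cases in the definition shows $\sigma(v_i) = b_i - b_{i+1}$, where $b_i = \mathbf{1}[v_i \in S]$. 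Hence the inner sum telescopes to $b_0 - \lim_{i\to\infty} b_i = \mathbf{1}[u \in S]$, the limit vanishing because $S$ is finite, and summing over $u$ yields $\sum_v f_S(v) = \sum_u \mathbf{1}[u \in S] = |S| = n$. The one point demanding care, and the main obstacle, is justifying the interchange of summation and the telescoping in an infinite tree; I expect the finiteness of $S$ to settle both, since it forces all but finitely many $b_i$ to be zero.

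Finally, for the connected case I would recall that a positive term corresponds to a vertex $v \in S$ with $\mathrm{par}(v) \notin S$. If $S$ induces a subtree, let $r$ be its vertex of maximum level; $r$ is unique and is an ancestor of every vertex of $S$, since the path in $\mathcal{T}_\infty$ between any two vertices of $S$ stays in $S$ and passes through their common ancestor, which must therefore lie in $S$. For any $v \in S$ with $v \ne r$, the unique tree path from $v$ up to $r$ lies in $S$ and contains $\mathrm{par}(v)$, so $\mathrm{par}(v) \in S$ and $v$ contributes no positive term; thus $r$ is the only vertex giving a positive term, as claimed.
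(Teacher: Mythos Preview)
Your proof is correct and follows essentially the same approach as the paper's own proof. Both arguments distribute each $f_S(v) = \pm|T_v|$ as $\pm 1$ contributions over the subtree $T_v$, regroup by vertex $u$, and then evaluate the contribution at each $u$ by walking up the chain of ancestors; your telescoping identity $\sigma(v_i) = b_i - b_{i+1}$ is simply a cleaner algebraic formulation of the paper's verbal argument that the number of in/out status changes along the ancestor path is odd when $u\in S$ and even otherwise.
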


\begin{proof}
The second equality is true because $f_S(v) \neq 0$ precisely when
$(v,\mathrm{par}(v))$ is an edge of the cut $(S,\overline{S})$.

To prove the first equality think of labeling each node of
$\mathcal{T}_\infty$ by a multiset of $+1$s and $-1$s. If $f(v) =
\nu(\ell(v))$ then add a label $+1$ to each of the $\nu(\ell(v))$
nodes in the subtree rooted at $v$. If $f(v) = -\nu(\ell(v))$ then
add a label $-1$ to each of the $\nu(\ell(v))$ nodes in the subtree
rooted at $v$. Clearly the sum of the labels in each multiset,
summed over all the nodes in $\mathcal{T}_\infty$, is equal to
$\sum_{v \in V(\mathcal{T}_\infty)} f_S(v)$. However, we claim that
the sum of the labels at a node $v$ is $+1$ if $v \in S$ and is $0$
if $v \not\in S$. To see this, consider the (infinite) path that
starts at $v$ and  then successively contains each ancestor of
$v$.

If $v \in S$ then the path will contain subpaths of nodes that are
in $S$, then not in $S$, and so on, alternately, until reaching the
infinite subpath of nodes not in $S$. Each time that a subpath
changes status, a $+1$ or a $-1$ was added to the labels of $v$.
Since the number of such changes is odd, and the first change
corresponds to a $+1$, the total sum is $+1$.

If $v \not\in S$, then a similar argument shows that the total sum of the labels is 0.
Thus the sums of the labels over all nodes is equal to $n$.

If $S$ is connected, then since it must be a tree, there is only one
node $v$ such that $v \in S$ and $\text{par}(v) \not\in S$. Thus
there is only one positive term in (\ref{eq:cutval}).
\end{proof}

\begin{corollary}
\begin{equation}
\delta(n) \ge \tau(n) \;\;\; \mbox{ and } \;\;\; \delta_C(n) \ge \tau_C(n).
\label{eq:ineq}
\end{equation}
\label{corr:ge}
\end{corollary}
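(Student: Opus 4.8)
The plan is to read off a SABP of $n$ directly from the function $f_S$ supplied by Theorem~\ref{thm:inout}, and then to minimize over all choices of $S$. Fix any $S \subseteq V(\mathcal{T}_\infty)$ with $|S| = n$. Every nonzero value of $f_S(v)$ equals $\nu(\ell(v)) = 2^{\ell(v)} - 1$ or its negative, and since $\ell(v) \ge 1$ for every vertex (leaves have $\ell(v) = 1$), each such value has the form $\pm(2^d - 1)$ with $d$ a positive integer. First I would collect the levels at which $f_S$ is positive into a multiset $P = \{ \ell(v) : v \in S,\ \mathrm{par}(v) \notin S \}$ and those at which it is negative into $N = \{ \ell(v) : v \notin S,\ \mathrm{par}(v) \in S \}$.

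By the first identity in Theorem~\ref{thm:inout},
\[
n \;=\; \sum_{v} f_S(v) \;=\; \sum_{i \in P} \nu_i - \sum_{j \in N} \nu_j \;=\; \nu(P) - \nu(N),
\]
so $(P, N)$ is a genuine SABP of $n$. Its number of parts is $|P| + |N| = |\{ v : f_S(v) \ne 0 \}|$, which by the second identity equals $|(S, \overline{S})|$. Hence $\tau(n) \le |(S, \overline{S})|$ for this particular $S$, and taking the minimum over all $S$ with $|S| = n$ yields $\tau(n) \le \delta(n)$.

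For the connected inequality I would further require that $S$ induce a subtree. Then the final clause of Theorem~\ref{thm:inout} guarantees exactly one positive term in the cut sum, i.e.\ $|P| = 1$; by definition this makes $(P, N)$ a CABP of $n$. The count of parts is unchanged, so $\tau_C(n) \le |(S, \overline{S})|$ for every connected $S$ of size $n$, and minimizing over such $S$ gives $\tau_C(n) \le \delta_C(n)$.

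There is no serious obstacle here, since Theorem~\ref{thm:inout} has already done the structural work; the only points requiring care are checking that every level number is a positive integer (so that the parts genuinely lie in $\{ \pm(2^d - 1) \}$) and observing that the ``exactly one positive term'' conclusion is precisely the defining condition $|P| = 1$ of a CABP, which is what converts the connected edge bound into the $\tau_C$ bound.
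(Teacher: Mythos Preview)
Your proof is correct and follows essentially the same route as the paper: define $P$ and $N$ from the sign of $f_S$, invoke Theorem~\ref{thm:inout} to identify $\nu(P)-\nu(N)=n$ and $|P|+|N|=|(S,\overline{S})|$, and then minimize over $S$ (respectively over connected $S$, using the ``exactly one positive term'' clause to force $|P|=1$). The only difference is that you spell out a couple of sanity checks---that $\ell(v)\ge 1$ and that $|P|=1$ is the CABP condition---which the paper leaves implicit.
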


\begin{proof}
By Theorem \ref{thm:inout}, every $S \subset \mathcal{T}_\infty$ can
be mapped to a SABP $(P,N)$ such that $|S| = \nu(P) - \nu(N)$ and
$|(S,\overline{S})| = |P|+|N|$, where $P= \{ \ell(v) : f_S(v) 
 \mbox { is positive } \}$,
and $N = \{ \ell(v): f_S(v) \mbox { is negative }  \}$. Moreover if
$S$ is connected then by Theorem \ref {thm:inout}, $|P| = 1$, i.e.
$(P,N)$ is a CABP. From this the second part of the Theorem follows.
\end{proof}

\subsubsection{To prove $\delta(n) \le \tau(n)$}

We now show that the inequality of (\ref{eq:ineq}) is in fact an
equality.

Just like we define a P-forest corresponding to an ABP $(P, \emptyset)$ of
a positive integer $n$, now we will define a tree (more precisely a 
subtree of  $\mathcal {T}_{\infty}$) that corresponds to a CABP
$( \{r\}, N)$ of a positive integer $n$. (We will assume that
$N = Greedy(\nu(N))$, and therefore by part (1) of Lemma \ref {lemma:greedseq},
only the smallest number in $N$ can possibly repeat. If it repeats,
it repeats only twice.) 
 We define the {\it
C-Tree} of $( \{r\}, N)$  as follows: consider a subtree of  
$\mathcal {T}_{\infty}$
with its root, say $v_r$, at a height $r$. Now
define a path ($v_r,v_{r-1},\ldots,v_h$) starting from $v_r$ as
follows: $v_{j-1}$ is defined to be the right child of $v_j$ if and only if  $j-1 \notin N$, else it is defined to be the left child of $v_j$, for $r
\ge j \ge  h+1$. If $N$ does not have any repeated members, then $h=1$,
else $h=t+1$, where $t = \min N$, the repeated (smallest) element in $N$. Now
construct the C-Tree of $( \{ r \}, N)$ from the subtree rooted at $v_r$ by
the following procedure: For $j=r$ to $h+1$, prune away the subtree
rooted at the right child of $v_j$ whenever  $v_{j-1}$ is the left child of $v_j$. If  $j=h$ then if
$h \ne 1$ prune away the subtrees rooted at both its children.
It is easy to see that  the number of vertices in the tree $S$  constructed
using the above method is exactly $\nu_{r} - \sum_{i \in N} \nu_i = n$,
 and $|(S,\overline S)| = |N| + 1 = |(\{r\} , N)|$.

\begin {theorem}
\label {deltaCtauCtheorem}

 $\delta_C(n) = \tau_C(n)$.

\end {theorem}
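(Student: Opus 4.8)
The plan is to prove the equality $\delta_C(n) = \tau_C(n)$ by establishing both inequalities, exploiting the C-Tree construction just described together with Corollary~\ref{corr:ge}. The inequality $\delta_C(n) \ge \tau_C(n)$ is already in hand: it is precisely the second statement of Corollary~\ref{corr:ge}, which follows from Theorem~\ref{thm:inout} by noting that a connected $S$ maps to a CABP (a SABP with $|P|=1$) of $n$ with $|(S,\overline S)| = |P|+|N| = 1 + |N|$ parts. So the entire content of the theorem reduces to the reverse inequality $\delta_C(n) \le \tau_C(n)$.

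To obtain $\delta_C(n) \le \tau_C(n)$, I would start from a \emph{minimal} CABP $(\{r\}, N)$ of $n$, so that $|(\{r\},N)| = \tau_C(n)$. By the remark preceding the C-Tree definition, we may assume $N = \mathrm{Greedy}(\nu(N))$, so that by part~(1) of Lemma~\ref{lemma:greedseq} the elements of $N$ are strictly decreasing except possibly for the smallest, which repeats at most twice; this is exactly the hypothesis under which the C-Tree construction is well-defined. The key step is then to invoke the properties of the C-Tree $S$ asserted at the end of its definition, namely that $S$ is a connected subtree of $\mathcal{T}_\infty$ with exactly $|S| = \nu_r - \sum_{i \in N}\nu_i = \nu(\{r\}) - \nu(N) = n$ vertices and with cut size $|(S,\overline S)| = |N| + 1 = |(\{r\},N)|$. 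Since $S$ is a connected vertex set of size $n$, its cut value is an upper bound for the minimum: $\delta_C(n) \le |(S,\overline S)| = |N|+1 = \tau_C(n)$. Combining the two inequalities gives $\delta_C(n) = \tau_C(n)$.

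The main obstacle is verifying the two claimed invariants of the C-Tree construction, which the excerpt states as ``easy to see'' but which deserve care. For the vertex count, I would argue that pruning the right subtree rooted at $v_j$ whenever $v_{j-1}$ is taken to be the \emph{left} child removes a complete binary tree on $\nu_{j-1}$ vertices, and that $j-1$ is a left-step precisely when $j-1 \in N$; summing these removals from the full tree $B_r$ on $\nu_r$ vertices yields $\nu_r - \sum_{i \in N}\nu_i$, with the doubled smallest element of $N$ accounted for by the special pruning at level $h$ (removing both children when $h \ne 1$). For the cut value, each pruning of a subtree rooted at a right child contributes exactly one edge to the cut, and one additional edge joins $v_r$ to its parent outside $S$; counting these gives $|N| + 1$. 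A small subtlety to check is the boundary behavior when $\min N = 1$ (so the repeated element has value $\nu_1 = 1$ and $h = t+1 = 2$) versus the case of no repetition ($h=1$), ensuring the arithmetic $|S| = n$ and the cut count hold uniformly in both regimes. Once these invariants are confirmed, the theorem follows immediately from the sandwiching above.
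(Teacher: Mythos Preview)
Your proposal is correct and follows essentially the same route as the paper: the paper also combines Corollary~\ref{corr:ge} for $\delta_C(n)\ge\tau_C(n)$ with the C-Tree of a minimal CABP (there taken in normal form) to obtain $\delta_C(n)\le\tau_C(n)$. Your version is in fact slightly more explicit, since you spell out the vertex-count and cut-count invariants of the C-Tree that the paper merely declares ``easy to see''.
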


\begin {proof}
Let $ (\{r\}, N)$ be  a minimal CABP of $n$ 
in normal form.  Let $S$ be the C-tree of $( \{r\}, N)$.
By the discussion above, $\delta_C(n) \le |(S,\overline S)| \le  |( \{r\}, N)| = \tau_C(n)$. The
Theorem follows, by combining with  Corollary \ref {corr:ge}
\end {proof}

\begin{theorem}\label{tau:n:is:delta:n:}
\[
 \delta(n) = \tau(n).
\]
Moreover, for any $n$ there exists a subforest $S$ of $\mathcal {T}_{\infty}$ such
that $|S| = n, |(S, \overline S)| = \tau(n)$ and  such that all the trees in the forest,
$S$, except possibly one are complete binary trees. If $(P,N)$ is a minimal
SABP of $n$ in normal form, the  subforest obtained by taking the disjoint union
of the C-tree of $(\{\max P\}, N)$ and the P-forest of
 $(P \setminus \{\max P\}, \emptyset)$ is such a subforest. 
\end{theorem}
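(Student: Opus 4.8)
The plan is to prove the two claims of Theorem~\ref{tau:n:is:delta:n:} together by exhibiting, for a minimal SABP $(P,N)$ of $n$ in normal form, an explicit subforest $S$ of $\mathcal{T}_\infty$ whose size is $n$ and whose cut size equals $\tau(n) = |P|+|N|$. Since Corollary~\ref{corr:ge} already gives $\delta(n) \ge \tau(n)$, it suffices to produce \emph{any} $S$ with $|S|=n$ and $|(S,\overline S)| \le \tau(n)$; the displayed construction will then simultaneously establish the ``moreover'' structural statement, because that construction is a disjoint union of complete binary trees (the P-forest) and at most one extra tree (the C-tree). So the entire theorem reduces to verifying that one candidate subforest works.

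First I would invoke Theorem~\ref{thm:normalform} to fix a minimal SABP $(P,N)$ of $n$ in normal form, so that $P = \text{Greedy}(\nu(P))$, $N = \text{Greedy}(\nu(N))$, and $P \cap N = \emptyset$. Write $r = \max P$. The candidate is $S = S_1 \cup S_2$, where $S_1$ is the C-tree of the CABP $(\{r\}, N)$ and $S_2$ is the P-forest of the ABP $(P \setminus \{r\}, \emptyset)$. From the construction preceding Theorem~\ref{deltaCtauCtheorem}, the C-tree $S_1$ has $|S_1| = \nu_r - \nu(N)$ vertices and $|(S_1,\overline{S_1})| = |N|+1$; from the definition of the P-forest, $S_2$ has $|S_2| = \nu(P \setminus \{r\}) = \nu(P) - \nu_r$ vertices and $|(S_2,\overline{S_2})| = |P|-1$. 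Adding these, $|S| = \nu(P) - \nu(N) = n$ and, provided $S_1$ and $S_2$ occupy disjoint regions of $\mathcal{T}_\infty$ with no identified cut edges, $|(S,\overline S)| = (|N|+1) + (|P|-1) = |P|+|N| = \tau(n)$, which is exactly what we need.

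The hard part will be arranging $S_1$ and $S_2$ to be vertex-disjoint and, crucially, edge-disjoint in their cuts, so that the cut sizes genuinely add rather than overlap or cancel. The C-tree of $(\{r\}, N)$ lives inside a subtree of $\mathcal{T}_\infty$ rooted at height $r$, while the $|P|-1$ complete binary trees of the P-forest have roots at the various heights $t \in P \setminus \{r\}$, all of which are strictly less than $r$. I would place these P-forest trees in a portion of $\mathcal{T}_\infty$ lying entirely outside the height-$r$ subtree that contains the C-tree (for instance, as siblings far to the right, hanging off ancestors of the C-tree's root or off a disjoint branch), using the self-similarity and infinitude of $\mathcal{T}_\infty$ to guarantee the necessary room. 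I must check that in this placement no tree of $S_2$ is adjacent to $S_1$ in a way that would merge two cut edges into one internal edge, and that the normal-form condition $P \cap N = \emptyset$ prevents any structural collision between the pruning pattern of the C-tree and the sizes $\nu_t$ of the P-forest trees.

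Finally I would assemble the equalities: $\delta(n) \le |(S,\overline S)| = \tau(n)$ by the explicit subforest, combined with $\delta(n) \ge \tau(n)$ from Corollary~\ref{corr:ge}, yields $\delta(n) = \tau(n)$. The same $S$ witnesses the structural claim, since $S_2$ consists entirely of complete binary trees and $S_1$ is the single possibly-non-complete tree. I expect the size and cut-size bookkeeping to be routine given the two preceding constructions; the genuine content, and the step warranting the most care, is the geometric realization showing that a valid edge-disjoint embedding of the C-tree and the P-forest into a common $\mathcal{T}_\infty$ actually exists, which is what legitimizes treating $|(S,\overline S)|$ as the sum $(|N|+1)+(|P|-1)$.
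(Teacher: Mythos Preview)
Your proposal is correct and follows essentially the same route as the paper: build $S$ as the disjoint union of the C-tree of $(\{\max P\},N)$ and the P-forest of $(P\setminus\{\max P\},\emptyset)$, then combine with Corollary~\ref{corr:ge}. The only point the paper makes explicit that you leave implicit is the check $\nu(N) < \nu_{\max P}$ (from $\max N < \max P$ together with Lemma~\ref{lemma:greedseq}(3)), which is needed so that $(\{\max P\},N)$ is actually a CABP of a positive integer and the C-tree construction applies; the disjoint-placement issue you flag is simply taken for granted in the paper, since $\mathcal{T}_\infty$ is infinite.
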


\begin{proof}
In Corollary \ref{corr:ge} we proved that $\delta(n) \ge \tau(n)$.
Below we will show that $\delta(n) \le \tau(n)$.

Let $(P,N)$ be a minimal SABP of $n$. By Theorem \ref{thm:normal} we
can assume that $(P,N)$ is in normal form. We will show that there
is a set $S \subset V(\mathcal{T}_\infty)$ where $|S| = n$ and
$|(S,\overline{S})| = |N|+|P|$, and such that all the trees in $S$,
except possibly one are complete.

If $N = \emptyset$ then we simply
take disjoint complete binary trees with all
leaves at the lowest level of $\mathcal {T}_\infty$  of size $\nu_j$ for each $j \in
P$.
Otherwise, $\max(P) > \max(N)$.  Since by
part (3) of Lemma \ref {lemma:greedseq}, 
$\nu(N) \le \nu_{\max (N) + 1} - 1 <
\nu_{\max(P)}$, we infer that  $(\{ \max P\}, N)$ is the CABP of
some positive integer $n$.  Let $S$ be  the forest consisting
of the C-tree of $(\{ \max P \}, N)$ and the P-forest of the
ABP $(P- \{ \max P\}, \emptyset)$. Cleary $|S| = n$ and
$|(S, \overline S)| = |P| + |N|$. Moreover since all the trees in
a P-forest are complete binary trees with
all leaves at the lowest level of $\mathcal {T}_\infty$, 
$S$ can contain at most one tree which is
not complete.
\end{proof}

\begin{theorem}
\label {thm:isoperimetricvalue}

\begin{equation*}
\delta(n) = \min_{0 \le v \le n} \{ \delta_P(v) + \delta_C(n-v) \} =
n+2+ 2 \min_{0 \le v \le n} \{ C(v) - T(n-v) - v \}.
\end{equation*}
\end{theorem}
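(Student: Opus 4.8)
The plan is to establish the two equalities separately, obtaining the second as a purely algebraic consequence of formulas already in hand and the first by a matching pair of inequalities. For the second equality I would substitute the closed forms $\delta_P(v) = 2C(v) - v$ from (\ref{eq:GC}) and $\delta_C(n-v) = (n-v) + 2 - 2T(n-v)$ from (\ref{eq:CT}) into $\delta_P(v) + \delta_C(n-v)$; collecting terms gives $\delta_P(v) + \delta_C(n-v) = n + 2 + 2\bigl(C(v) - T(n-v) - v\bigr)$, and taking the minimum over $0 \le v \le n$ on both sides yields the claim at once. No combinatorics is needed here.

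For the first equality, $\delta(n) = \min_{0\le v\le n}\{\delta_P(v) + \delta_C(n-v)\}$, I would argue the two directions. For the upper bound, fix $v$ and take a minimal P-forest of size $v$, whose cut is $\delta_P(v) = \tau_P(v)$ by Lemma \ref{deltaPtauPlemma}, together with a minimal C-tree of size $n-v$, whose cut is $\delta_C(n-v) = \tau_C(n-v)$ by Theorem \ref{deltaCtauCtheorem}. Since $\mathcal{T}_\infty$ is infinite, these two structures can be embedded in vertex-disjoint regions between which no edges are induced, so their union $S$ is an induced subforest with $|S| = n$. Because $S$ induces a forest, every cut edge has its $S$-endpoint in exactly one component, so $|(S,\overline S)|$ is the sum of the two component cuts, namely $\delta_P(v) + \delta_C(n-v)$. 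Hence $\delta(n) \le \delta_P(v) + \delta_C(n-v)$ for every $v$, and therefore $\delta(n) \le \min_{0\le v\le n}\{\delta_P(v) + \delta_C(n-v)\}$.

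For the lower bound I would invoke the structural part of Theorem \ref{tau:n:is:delta:n:}: there is an optimal subforest $S$ with $|(S,\overline S)| = \delta(n)$ in which all trees are complete binary trees with all leaves at the lowest level, except possibly one. Let $v$ be the number of vertices lying in the complete trees and $n-v$ the number in the remaining (at most one) tree. Since $S$ is induced, no edges run between distinct components, so the cut again splits as the contribution of the complete trees plus that of the exceptional tree. The complete trees realize an ABP of $v$ whose number of parts equals their contribution to the cut, which is thus at least $\tau_P(v) = \delta_P(v)$; the exceptional tree is connected of size $n-v$, so its contribution is at least $\delta_C(n-v)$. Therefore $\delta(n) \ge \delta_P(v) + \delta_C(n-v) \ge \min_{0\le v\le n}\{\delta_P(v) + \delta_C(n-v)\}$, where the degenerate cases $v \in \{0, n\}$ are covered by the conventions $\delta_P(0) = \delta_C(0) = 0$. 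Combining the two bounds gives the first equality.

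I expect the main obstacle to be the careful bookkeeping of the cut decomposition, namely verifying that because $S$ is an induced subforest the cut edges partition cleanly among the components, so that the total cut is exactly the sum of the component cuts, together with a clean handling of the boundary values $v=0$ and $v=n$. The genuinely hard content — the existence of an optimal subforest in which at most one tree fails to be complete — has already been supplied by Theorem \ref{tau:n:is:delta:n:}, so what remains is essentially the assembly of these pieces into a ``positive part meets connected part'' splitting that mirrors the two subproblems $\delta_P$ and $\delta_C$.
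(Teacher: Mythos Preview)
Your proposal is correct and follows essentially the same approach as the paper: the upper bound by disjoint union of a P-forest and a C-tree, the lower bound by invoking the structural part of Theorem~\ref{tau:n:is:delta:n:} to split an optimal forest into its complete-tree components and the single exceptional tree, and the second equality by direct substitution of (\ref{eq:CT}) and (\ref{eq:GC}). The paper's proof is terser (it dispatches the upper bound with ``Clearly'') but the content is the same; your added remarks on the cut decomposing over components and on the conventions at $v\in\{0,n\}$ are the natural details behind that ``Clearly''.
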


\begin{proof}
Clearly $\delta(n) \le \min_{0 \le v \le n} \{ \delta_P(v) +
\delta_C(n-v) \}$. By Theorem \ref {tau:n:is:delta:n:}, for any $n$, we can
find a subforest $S$ of $\mathcal {T}_{\infty}$
with $|(S,\overline S)| = \delta(n)$ such
that at most one of its trees is not a complete binary tree, with
all leaves at the lowest level of $\mathcal {T}_\infty$. 
Clearly these binary trees together form a P-forest $S'$  of  the ABP
of some number $v'$, where $0 \le v' \le n$. Also $S-S'$ is 
a connected subtree of $\mathcal {T}_\infty$. Therefore 
the number of out going edges from $S'$ is at least $\delta_P(v')$ and
the number of out going edges from $S-S'$ is at least $\delta_C(n-v')$. 
It follows that   $\delta(n) \ge \min_{0 \le v \le n} \{
\delta_P(v) + \delta_C(n-v) \}$. The second equality follows from
(\ref{eq:CT}) and (\ref{eq:GC}).
\end{proof}

\subsection{Towards a better understanding of $\delta(n)$}

Though Theorem \ref {thm:isoperimetricvalue} allows us to express $\delta(n)$
in terms of $\delta_P(n)$ and $\delta_C(n)$, 
it would be nice to have a better
understanding of the sequence $\delta(n)$.
When we study table \ref {table:numbers} containing
 values of $\delta(n),\delta_P(n)$ and
$\delta_C(n)$ for small values of $n$ we cannot fail to notice that
for a remarkably large number of columns in the table, the entry from
the third row equals either the entry in the first row or the second row.
That is  either $\delta(n) = \delta_P(n)$ or $\delta(n) = \delta_C(n)$.
This observation motivates us to carefully consider the two sets,
${\cal X} = \{ n \in \mathcal N \setminus \{ 0 \}:  \delta(n) = \delta_P(n) \}$ and 
${\cal Y} = \{ n \in \mathcal N \setminus \{ 0 \} : \delta(n) = \delta_C(n) \}$.  We would like
to carefully consider the question of characterising the numbers in
${\cal X}$ and ${\cal Y}$.  First we will show that the sets ${\cal X}$
and ${\cal Y}$ are intimately related with each other: There is a one to
one correspondence between these sets.  To prove this we need to note
some symmetries in the sequences correspoding to $\delta(n),\delta_P(n)$
and $\delta_C(n)$.  We explain this by defining the dual of a number:

\begin {definition}
{\bf The dual function:} Let the function 
 $f : \mathcal N \setminus \{ 0 \} \rightarrow \mathcal N \setminus \{ 0 \}$ be defined as follows:
If $n = \nu_k$, for some $k \ge 1$, then $f(n) = n$. Else, 
$f(n) = 3.2^d-n-2$, where $d = \lfloor \log n \rfloor$.  We say that
$f(n)$ is the dual of $n$. 
\end {definition}

\begin {lemma}
\label {dualofduallemma}

 The dual of the dual of $n$ equals $n$. That is $f(f(n))  = n$. 

\end {lemma}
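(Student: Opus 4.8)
The plan is to handle the two branches of the definition of $f$ separately, the crux being to show that the non-trivial branch $n \mapsto 3\cdot 2^d - n - 2$ acts as an involution on each dyadic block because it is precisely the reflection about that block's midpoint.

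First I would dispose of the easy case. If $n = \nu_k$ for some $k \ge 1$, then $f(n) = n$ directly from the definition, so $f(f(n)) = f(n) = n$ and there is nothing further to prove.

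Now suppose $n$ is not of the form $\nu_k$, and set $d = \lfloor \log n \rfloor$, so that $2^d \le n \le 2^{d+1}-1$. Since $2^{d+1}-1 = \nu_{d+1}$ is a fixed point and hence excluded, we in fact have $2^d \le n \le 2^{d+1}-2$. The key observation is that this block $[2^d,\,2^{d+1}-2]$ contains no number of the form $\nu_k = 2^k-1$: such a number would force $k \ge d+1$ (from $2^k-1 \ge 2^d$) and simultaneously $k \le d$ (from $2^k-1 \le 2^{d+1}-2$), a contradiction. Consequently every element of the block is a non-fixed point with floor-logarithm equal to $d$. Next I would verify that $f$ maps the block into itself: writing $f(n) = 3\cdot 2^d - n - 2 = (3\cdot 2^d - 2) - n$, this is the reflection of $n$ about the midpoint $3\cdot 2^{d-1}-1$ of $[2^d,\,2^{d+1}-2]$ (indeed $(2^d + (2^{d+1}-2))/2 = 3\cdot 2^{d-1}-1$), so $f(n)$ again lies in $[2^d,\,2^{d+1}-2]$. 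In particular $f(n)$ is not a fixed point and $\lfloor \log f(n)\rfloor = d$, so the second application of $f$ invokes the same branch with the same value of $d$. Therefore
$$ f(f(n)) = 3\cdot 2^d - f(n) - 2 = 3\cdot 2^d - \bigl(3\cdot 2^d - n - 2\bigr) - 2 = n, $$
as desired.

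The only genuinely delicate point is the one isolated above: that a single application of $f$ leaves us inside the same dyadic block $[2^d,\,2^{d+1}-2]$, and that this block is free of fixed points, so the formula re-applies verbatim with an unchanged $d$. Once that containment and the absence of fixed points are established, the involution property reduces to the trivial reflection identity, and the algebra is immediate.
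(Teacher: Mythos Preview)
Your proof is correct and follows essentially the same approach as the paper's: dispose of the fixed-point case, then show that for non-fixed $n$ the image $f(n)$ lies in the same interval $[2^d,\,2^{d+1}-2]$ so that $\lfloor \log f(n)\rfloor = d$ and the formula re-applies to give $f(f(n))=n$. You are simply more explicit than the paper in arguing that this interval contains no $\nu_k$, hence that the second application of $f$ indeed uses the non-trivial branch; the paper leaves this implicit in its strict inequality $n' < 2^{d+1}-1$.
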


\begin {proof}
If $n = \nu_k$, for some $k \ge 1$, then clearly  $f(f(n))  = n$. 
Else let $n' =f(n) = 3.2^d-n-2$, where $d = \lfloor \log n \rfloor$. 
Since $2^d \le n < 2^{d+1} - 1$, clearly $2^d \le n'  < 2^{d+1} - 1$ also.
Thus $ \lfloor \log {n'} \rfloor  = d = \lfloor \log n \rfloor $.
  Thus $f(n') = 3.2^d - n' - 2 = n$, as
required. 
\end {proof}

\begin {lemma}
\label {dualSABPlemma}
(1) $\tau (f(n)) \le \tau (n)$, \ \ \ (2) $\tau_P(f(n)) \le \tau_C(f(n))$ 
 \ \ \ (3) $\tau_C(f(n)) \le \tau_P(n)$.  
\end {lemma}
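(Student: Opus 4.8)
The plan is to prove all three parts at once from a single \emph{dualizing} operation on signed almost binary partitions that sends a normal-form SABP of $n$ to an SABP of $f(n)$ with no more parts, and that interchanges the two extreme shapes (one positive part, versus no negative part). Write $d = \lfloor \log n \rfloor$. The engine is the identity established inside the proof of Lemma \ref{dualofduallemma}: for $n \ne \nu_k$ we have $f(n) = 3\cdot 2^d - n - 2 = \nu_{d+1} + \nu_d - n$, while $f(\nu_k) = \nu_k$. Thus passing from $n$ to $f(n)$ is exactly ``add $\nu_{d+1} + \nu_d$ and negate'', which is what the operation below does at the level of partitions.

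First I would set up the dual map. Let $(P,N)$ be a minimal SABP of $n$ in normal form (Theorem \ref{thm:normal}), so $\max P = d + \epsilon$ with $\epsilon \in \{0,1\}$ by condition (C). Put
\[
P' = \{\, d+1-\epsilon \,\} \cup N, \qquad N' = P \setminus \{\max P\},
\]
i.e. relabel the top part from $\nu_{d+\epsilon}$ to the other admissible value $\nu_{d+1-\epsilon}$, move it together with all old negative parts onto the positive side, and move the remaining old positive parts onto the negative side. Since $\nu_{d+1-\epsilon} + \nu_{d+\epsilon} = \nu_{d+1} + \nu_d$, a one-line computation gives $\nu(P') - \nu(N') = (\nu_{d+1}+\nu_d) - (\nu(P)-\nu(N)) = f(n)$, so $(P',N')$ is an SABP of $f(n)$; and $|P'|+|N'| = (1+|N|)+(|P|-1) = |P|+|N|$. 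Hence $\tau(f(n)) \le \tau(n)$, which is part (1) (the case $n=\nu_k$ being trivial, as then $f(n)=n$).

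Parts (2) and (3) then fall out as the two extreme specializations of this one map. For part (3), take $(P,\emptyset)$ to be the greedy, hence minimal, ABP of $n$; by Lemma \ref{lemma:greedseq} its top part is $\nu_d$, so $\epsilon = 0$ and the image is $(\{d+1\}, P\setminus\{d\})$, which has a single positive part --- a CABP of $f(n)$ with $|P|$ parts. This gives $\tau_C(f(n)) \le \tau_P(n)$, exactly part (3). Dually, for part (2) take $(\{r\},N)$ to be a minimal CABP of $n$, normalized so that $|P|=1$ is preserved (both normalizing operations in the proof of Theorem \ref{thm:normal} keep $|P|=1$); since $\nu_r \ge n \ge 2^d > \nu_d$ forces $r \ge d+1$ while normality forces $r \le d+1$, we get $r=d+1$, so $\epsilon=1$ and the image is $(\{d\}\cup N,\emptyset)$, an ABP of $f(n)$ with $1+|N|$ parts. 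This yields $\tau_P(f(n)) \le \tau_C(n)$. Here I should flag what appears to be a typo in the printed right-hand side of part (2): the correct and usable inequality is $\tau_P(f(n)) \le \tau_C(n)$, not $\tau_P(f(n)) \le \tau_C(f(n))$. Indeed, since $f$ is an involution (Lemma \ref{dualofduallemma}) it is onto, so the latter would assert $\tau_P(m) \le \tau_C(m)$ for every $m$; but this fails already at $m=12$, where $\tau_P(12)=4$ while $\tau_C(12)=2$.

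The one thing that needs care is the reduction to normal form, since the whole argument hinges on the top part lying in $\{\nu_d,\nu_{d+1}\}$: part (1) uses Theorem \ref{thm:normal}, part (3) uses that the minimal ABP is greedy with top $\nu_d$ (Lemma \ref{lemma:greedseq}), and part (2) uses that a minimal CABP normalizes to $r=d+1$ with $|P|=1$ intact. Everything else is the single arithmetic identity $\nu_{d+1-\epsilon}+\nu_{d+\epsilon}=\nu_{d+1}+\nu_d$, so I do not expect a serious obstacle beyond this bookkeeping. The pleasant structural point is that the ABP-to-CABP and CABP-to-ABP specializations are mutually inverse, matching $f\circ f=\mathrm{id}$; consequently, applying the corrected (2) and part (3) to $f(n)$ in place of $n$ and using Lemma \ref{dualofduallemma} immediately upgrades them to the equalities $\tau_P(f(n))=\tau_C(n)$ and $\tau_C(f(n))=\tau_P(n)$, which is the form in which they are actually needed.
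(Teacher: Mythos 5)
Your proof is correct and takes essentially the same route as the paper: the identical dual map $N' = P \setminus \{\max P\}$, $P' = \{d+1-\epsilon\} \cup N$ applied to a normal-form minimal SABP, the same arithmetic identity $n + n' = \nu_{d+1} + \nu_d = 3\cdot 2^d - 2$, and the same observation that the map exchanges ABPs ($N=\emptyset$ forces $|P'|=1$) with CABPs ($|P|=1$ forces $N'=\emptyset$). You are also right that part (2) as printed is a typo for $\tau_P(f(n)) \le \tau_C(n)$ --- the paper's own proof concludes precisely that inequality, which is the form used in Theorem \ref{lemma:dual for delta} --- and your extra verifications (the greedy top part of an ABP is $\nu_d$, the CABP root is forced to $d+1$, and normalization preserves $|P|=1$) merely make explicit some steps the paper leaves implicit.
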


\begin {proof}

If $n = \nu_k$, for some $k \ge 1$, then 
clearly all the three statements are true,
since in this case $f(n) = \nu_k$ and  
therefore $\tau(f(n)) = \tau_P(f(n)) = \tau_C(f(n)) = 1$. 
Now let $n \ne \nu_k$, for $k \ge 1$.  Given a  minimal SABP (ABP or CABP) $(P,N)$
of $n$ in normal form, define $P'$ and $N'$ as follows.   

Let $N' = P \setminus \{ \max(P) \}$. 
Recall that by the definition of normal form, $\max P \in \{ d, d+1\}$ where
 $d = \lfloor \log n \rfloor$. Now define $P'$ as follows:

\begin{equation}
P' = \begin{cases}
 \{d+1\} \cup N  & \text{ if } \max(P) = d, \\
 \{d\}   \cup N  & \text{ if } \max(P) = d+1.
\end{cases}
\label{eq:Ddef}
\end{equation}

Note that, if $P$ is a multi-set and $\max(P)$ repeats in $P$ then
to get $N'$  only one copy of $\max P$ will be removed from $P$.
Similarly if $N$ already contains $d$,  $\{d\} \cup N$ will contain
one more copy of $d$.

It is easy to see that $|(P',N')| = |(P,N)|$. Let $(P',N')$ correspond
to $n'$.  Then  $n' = \sum_{j \in
{P'}} \nu(j) - \sum_{j \in {N'}} \nu(j)$. Therefore  $n + n' = \nu_{d+1} +
\nu_d = 3 \cdot 2^d -2$, so that $n' = 3 \cdot 2^d -2-n = f(n)$, as
required.   It follows that $\tau(f(n)) \le |(P',N')| = |(P,N)| = \tau(n)$.

Finally
if $(P,N)$ is a ABP then $N = \emptyset$ and thus $|P'|=1$ so that
$(P',N')$ is a CABP.  If
$(P,N)$ is a CABP then $|{P}|=1$ and thus $|{N'}|=0$ so that
$(P',N')$ is an ABP. From this we can infer that $\tau_P(f(n)) \le \tau_C(n)$
and $\tau_C(f(n)) \le \tau_P(n)$. 

\end {proof}

\begin{theorem}\label{lemma:dual for delta}
\[
\tau(n)   = \tau( f(n) ) \;\;\; \text{and} \;\;\;
\tau_C(n) = \tau_P( f(n)  ) \;\;\; \text{and}  \;\;\; \tau_P(n) = \tau_C(f(n)) .
\]
\end{theorem}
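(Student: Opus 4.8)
The plan is to derive all three equalities purely formally from two facts already in hand: the involution property $f(f(n)) = n$ of Lemma \ref{dualofduallemma}, and the directional inequalities of Lemma \ref{dualSABPlemma}. The guiding observation is that each of those inequalities holds for \emph{every} positive integer, so I may freely substitute $f(n)$ in place of the argument; invoking $f(f(n)) = n$ then turns the substituted inequality into the reverse of the original, and the two together pinch to an equality. Since $f$ maps $\mathcal N \setminus \{0\}$ into itself, all substitutions stay inside the domain, so this manoeuvre is legitimate.

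For the first equality I would start from the inequality $\tau(f(m)) \le \tau(m)$, valid for all $m$. Taking $m = n$ gives $\tau(f(n)) \le \tau(n)$. Taking instead $m = f(n)$ gives $\tau(f(f(n))) \le \tau(f(n))$, and since $f(f(n)) = n$ this reads $\tau(n) \le \tau(f(n))$. The two inequalities together yield $\tau(n) = \tau(f(n))$.

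The remaining two equalities are entangled and must be treated as a pair, because the cross inequalities of Lemma \ref{dualSABPlemma} relate $\tau_P \circ f$ to $\tau_C$ and $\tau_C \circ f$ to $\tau_P$; I write them as $\tau_P(f(m)) \le \tau_C(m)$ and $\tau_C(f(m)) \le \tau_P(m)$ for all $m$. Taking $m = n$ in the first gives $\tau_P(f(n)) \le \tau_C(n)$, while taking $m = f(n)$ in the second gives $\tau_C(f(f(n))) \le \tau_P(f(n))$, that is $\tau_C(n) \le \tau_P(f(n))$; chaining these produces $\tau_C(n) = \tau_P(f(n))$. Symmetrically, taking $m = n$ in the second and $m = f(n)$ in the first gives $\tau_C(f(n)) \le \tau_P(n)$ together with $\tau_P(f(f(n))) \le \tau_C(f(n))$, i.e. $\tau_P(n) \le \tau_C(f(n))$, hence $\tau_P(n) = \tau_C(f(n))$.

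There is essentially no obstacle remaining, since all of the combinatorial substance lives in the explicit construction of $(P',N')$ carried out in the proof of Lemma \ref{dualSABPlemma}; the present theorem is a clean formal corollary of that lemma and the involution. The only point requiring care is to confirm that each inequality of Lemma \ref{dualSABPlemma} is genuinely stated for an arbitrary positive integer (so that replacing the argument by $f(n)$ is allowed), which indeed it is, as that lemma's proof fixes an arbitrary $n$ throughout.
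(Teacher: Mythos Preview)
Your proposal is correct and follows exactly the paper's approach: apply each inequality of Lemma~\ref{dualSABPlemma} once at $n$ and once at $f(n)$, then use the involution $f(f(n))=n$ from Lemma~\ref{dualofduallemma} to obtain the reverse inequalities and hence equality. You have also correctly read inequality~(2) of Lemma~\ref{dualSABPlemma} as $\tau_P(f(n)) \le \tau_C(n)$ (the form established in that lemma's proof), which is what is actually needed here.
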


\begin{proof}

By Lemma \ref {dualSABPlemma}, we have $\tau(f(n)) \le \tau(n)$.
Recalling that by Lemma \ref {dualofduallemma}, we have
$f(f(n)) = n$, $ \tau(n) \le \tau(f(n)$ also, by applying Lemma
\ref {dualSABPlemma} to $f(n)$. The other equalities follow by a similar
argument.  
\end{proof}


\begin{theorem}
\label {dualDeltaTheorem}
For all $n \ge 2$,
\[
\delta_C(n) = \delta_P(f(n)) \mbox{ and }
\delta_P(n) = \delta_C(f(n))
\]
and
\[
\delta(n) = \delta(f(n)).
\]
\end{theorem}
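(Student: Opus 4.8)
The plan is to reduce each of the three claimed identities to the corresponding identity for the partition functions $\tau$, $\tau_C$, $\tau_P$, which has already been established in Theorem \ref{lemma:dual for delta}. The bridge is provided by the three translation results between the isoperimetric numbers and the partition numbers: $\delta(n) = \tau(n)$ (Theorem \ref{tau:n:is:delta:n:}), $\delta_C(n) = \tau_C(n)$ (Theorem \ref{deltaCtauCtheorem}), and $\delta_P(n) = \tau_P(n)$ (Lemma \ref{deltaPtauPlemma}), each of which holds for every positive integer. Thus the entire content of the theorem sits ``downstream'' of facts already proved, and the proof is a matter of chaining equalities.

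Concretely, for the first identity I would write
\[
\delta_C(n) = \tau_C(n) = \tau_P(f(n)) = \delta_P(f(n)),
\]
where the first step is Theorem \ref{deltaCtauCtheorem}, the middle step is the equality $\tau_C(n) = \tau_P(f(n))$ supplied by Theorem \ref{lemma:dual for delta}, and the last step applies Lemma \ref{deltaPtauPlemma} at the argument $f(n)$. The second identity is obtained symmetrically via
\[
\delta_P(n) = \tau_P(n) = \tau_C(f(n)) = \delta_C(f(n)),
\]
and the third via
\[
\delta(n) = \tau(n) = \tau(f(n)) = \delta(f(n)),
\]
the middle equality here being the self-duality $\tau(n) = \tau(f(n))$ from Theorem \ref{lemma:dual for delta}.

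Since every step in these three chains is an already-established equality, there is essentially no obstacle to surmount; the statement is a corollary of the partition-side duality combined with the three geometric-combinatorial translations. The only point requiring a moment's care is the boundary behaviour. I would check that $f(n)$ is again a positive integer so that the translation results apply at that argument; this is immediate because $f$ maps $\mathcal N \setminus \{0\}$ to itself and satisfies $f(f(n)) = n$ by Lemma \ref{dualofduallemma}. I would then confirm that the hypothesis $n \ge 2$ keeps all quantities within the range where the cited results are valid, so that no small-case exception intervenes.
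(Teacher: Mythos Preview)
Your proof is correct and follows exactly the paper's approach: the paper's proof simply says the result is immediate from Lemma~\ref{deltaPtauPlemma} and Theorems~\ref{deltaCtauCtheorem}, \ref{tau:n:is:delta:n:}, and \ref{lemma:dual for delta}, and you have written out explicitly the three chains of equalities that this entails.
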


\begin{proof}
This is immediate from  Lemma \ref {deltaPtauPlemma} 
  and Theorems \ref {deltaCtauCtheorem}, 
\ref {tau:n:is:delta:n:} and  \ref {lemma:dual for delta}.
\end{proof}

Now we are in a position to state the  relation between the 
two sets ${\cal X} =  \{ n \in \mathcal N \setminus \{ 0 \}: \delta(n) = \delta_P(n) \}$ and
${\cal Y }= \{ n \in  \mathcal N \setminus \{ 0 \} : \delta(n) = \delta_P(n) \} $.  Also
define ${\cal X}_n = \{ k \in {\cal X}:  k < n \}$, and
${\cal Y}_n  = \{ k \in {\cal Y}: k < n \}$.   

\begin {theorem}
\label {XYthm}
Let $n$ be a positive integer and let $f(n)$ be its dual. Then,
\begin {enumerate}
\item  $n \in {\cal X} $ if and only if   $f(n) \in {\cal Y}$.

\item $n \in {\cal Y}$ if and only if  $f(n) \in {\cal X}$. 

\item  $|{\cal X}_{2^{d+1}}| = |{\cal Y}_{2^{d+1}}|$
\end {enumerate}

\end {theorem}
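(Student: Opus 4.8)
The plan is to deduce everything from the duality relations of Theorem~\ref{dualDeltaTheorem} together with the fact (Lemma~\ref{dualofduallemma}) that $f$ is an involution. First I would prove part~(1). By definition $f(n) \in {\cal Y}$ exactly when $\delta(f(n)) = \delta_C(f(n))$. Using Theorem~\ref{dualDeltaTheorem} I would rewrite both sides in terms of $n$: the relation $\delta(n) = \delta(f(n))$ gives $\delta(f(n)) = \delta(n)$, while $\delta_P(n) = \delta_C(f(n))$ gives $\delta_C(f(n)) = \delta_P(n)$. Substituting, $f(n) \in {\cal Y}$ is equivalent to $\delta(n) = \delta_P(n)$, i.e.\ to $n \in {\cal X}$. (The single case $n = 1 = \nu_1$ excluded by the hypothesis $n \ge 2$ of Theorem~\ref{dualDeltaTheorem} satisfies $f(1) = 1$ and $\delta(1) = \delta_P(1) = \delta_C(1) = 1$, so the equivalence holds trivially there.) This settles part~(1).

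Part~(2) I would obtain for free by applying part~(1) to the integer $f(n)$ in place of $n$: this gives $f(n) \in {\cal X}$ if and only if $f(f(n)) \in {\cal Y}$, and since $f(f(n)) = n$ by Lemma~\ref{dualofduallemma}, the right-hand condition is just $n \in {\cal Y}$. Hence $n \in {\cal Y}$ if and only if $f(n) \in {\cal X}$, as required.

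For part~(3) the key structural fact I need is that $f$ restricts to an involution of the finite set $\{\, m : 1 \le m < 2^{d+1} \,\}$; that is, $m < 2^{d+1}$ implies $f(m) < 2^{d+1}$. I would argue exactly as in the proof of Lemma~\ref{dualofduallemma}: if $m = \nu_k$ then $f(m) = m$, and otherwise, writing $d' = \lfloor \log m \rfloor$, one has $2^{d'} \le f(m) < 2^{d'+1} - 1$, so that $f(m)$ lies in the same dyadic block $[2^{d'}, 2^{d'+1})$ as $m$. In either case $m < 2^{d+1}$ forces $d' \le d$ and hence $f(m) < 2^{d+1}$. Since $f(f(m)) = m$, this restriction is a bijection of $\{1, \ldots, 2^{d+1}-1\}$ onto itself. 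I expect this magnitude bound to be the only genuinely non-routine point of the whole theorem; parts~(1) and~(2) are pure substitution once Theorem~\ref{dualDeltaTheorem} is in hand.

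Finally I would combine the range-preservation with parts~(1) and~(2). If $k \in {\cal X}_{2^{d+1}}$ then $k \in {\cal X}$ and $k < 2^{d+1}$, so $f(k) \in {\cal Y}$ by part~(1) and $f(k) < 2^{d+1}$, whence $f(k) \in {\cal Y}_{2^{d+1}}$; symmetrically, part~(2) sends ${\cal Y}_{2^{d+1}}$ into ${\cal X}_{2^{d+1}}$. Because $f$ is its own inverse, these two maps are mutually inverse bijections between ${\cal X}_{2^{d+1}}$ and ${\cal Y}_{2^{d+1}}$, and therefore $|{\cal X}_{2^{d+1}}| = |{\cal Y}_{2^{d+1}}|$, completing the proof.
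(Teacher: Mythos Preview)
Your proof is correct and follows essentially the same approach as the paper: deduce parts~(1) and~(2) directly from the duality relations of Theorem~\ref{dualDeltaTheorem} together with the involution property $f\circ f = \mathrm{id}$, and obtain part~(3) from the observation that $f$ maps each dyadic block $[2^{d'},2^{d'+1}-1]$ to itself. Your write-up is in fact a bit more careful than the paper's---you establish the full equivalence in~(1) rather than only one implication, and you explicitly treat the boundary case $n=1$ excluded by the hypothesis $n\ge 2$ of Theorem~\ref{dualDeltaTheorem}---but there is no substantive difference in the argument.
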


\begin {proof}

If $n \in {\cal X}$, then $\delta (n)  = \delta_P (n)$. But
by Theorem \ref {dualDeltaTheorem}, we have $\delta(n) = \delta (f(n))$
and $\delta_P(n) = \delta_C(f(n))$. It follows that 
$\delta_C(f(n)) = \delta(f(n))$, i.e. $f(n) \in {\cal Y}$. The second
statement can be proved by a similar argument.  Finally note
that if $2^d \le n \le  2^{d+1}-1$ we also have $2^d \le f(n) \le 2^{d+1} -1$.
$f(2^d) = 2^{d+1} - 2, f(2^d+1) = 2^{d+1} - 3, \ldots, f(2^{d+1} - 2) = 2^d$ and so on, while
$f(2^{d+1} - 1) = 2^{d+1} - 1$. We infer from first and second statements
that $|{\cal X}_{2^{d+1}}| = |{\cal Y}_{2^{d+1}}|$.  

\end {proof}

The above Theorem implies that if  we can characterise the numbers
in  the set ${\cal X}$  we can also characterise the
number in the 
set ${\cal Y}$.  Now we discuss an algorithmic motivation for studying
the sets ${\cal X}$ and ${\cal Y}$.

\noindent {\bf Complexity of Computing  $\delta(n)$: A motivation
for studying the set ${\cal X}$ and ${\cal Y}$: }
How efficiently can we compute $\delta_P(n), \delta_C(n)$ and $\delta(n)$?
In view of Lemma \ref {lemma:greedy}, we know that
\begin {eqnarray}
\label {deltaPreclemma}
\delta_P(n)  &=& 1 \mbox { if $n  = \nu_k$ for some $k \ge 1$ } \\
    &=& 1 + \delta_P( n - \nu_{\lfloor \log n \rfloor}) 
\end {eqnarray} 
Therefore we can compute $\delta_P(n)$ in $O(\log n)$ time. Now
using Theorem  \ref {dualDeltaTheorem},  we know that $\delta_C(n) 
= \delta_P(f(n))$,
and thus $\delta_C(n)$ also can be computed in $O(\log n)$ time, recalling
that $\lfloor \log f(n) \rfloor = \lfloor \log n \rfloor$. 
To compute $\delta(n)$ we can use Theorem \ref {thm:isoperimetricvalue}:  
Let us use two
arrays of size $n'=2^{\lceil \log n \rceil}$ each,  
to store the values of  $\delta_P(k)$ and
$\delta_C(k)$ respectively  for
$1 \le k \le n'$. It is easy to see that this can be done in $O(n)$ time,
using Equation \ref {deltaPreclemma} and  then Theorem \ref {dualDeltaTheorem}.
Now we can compute $\delta(n)$ in  $O(n)$ time using 
Theorem \ref {thm:isoperimetricvalue}.

Can we compute $\delta(n)$ in $o(n)$ time ? As of now, we do not know
any algorithm for this. But we observe that
 Theorem \ref {thm:isoperimetricvalue}
can be rewritten as 

\begin {eqnarray}
 \delta(n) = \min_{v \in {\cal X }, n-v \in {\cal Y} }
  \delta_P(v) + \delta_C(n-v) 
\end {eqnarray}
  To see this note that  if 
 $\delta(v) <  \delta_P(v)$
then we have a subforest of ${\mathcal T}_\infty$ on $v$ vertices,
with number of out going edges strictly less than  $\delta_P(v)$. Now
taking the disjoint union of this subforest with 
a   subtree of $\mathcal {T}_\infty$ on
$n-v$ vertices with exactly $\delta_C(n-v)$ outgoing edges, 
we get a subforest of $\mathcal {T}_\infty$
with $< \delta_P(v) + \delta_C(n-v)$ out going edges. Thus,
$\delta(n) < \delta_P(v) + \delta_C(n-v)$. 
We infer that if $\delta(n) = \delta_P(v) + \delta_C(n-v)$, then 
$v \in {\cal X}$.  
A similar reasoning tells 
us that  $n - v \in {\cal Y}$.  Suppose we can enumerate
the members of ${\cal X}_n$ in ascending order in $O(|{\cal X}_n|)$ time. 
 Note that 
if $k \in {\cal X}$ and
$k \ne \nu_i$ for any $i \ge 1$, 
 then $k - \nu_{\lfloor \log k \rfloor } \in {\cal X}$
also. Thus using Equation \ref {deltaPreclemma},
we can store the members of
${\cal X}_{n'}$   (where $n' = 2^{\lceil \log n \rceil}$) 
along with the corresponding $\delta_P$ values in arrays,
just the same way we did earlier. Now that we have stored the members
${\cal X}_{n'}$ in arrays, we can store the members of ${\cal Y}_{n'}$ also
along with their corresponding $\delta_C$ values,  
by using Theorem \ref {XYthm}: For each member $k \in {\cal X}_{n'}$,
add $f(k)$  in 
${\cal Y}_{n'}$, and $\delta_C(f(k)) = \delta_P(k)$.    
From this  it
is easy to see that we can compute $\delta(n)$ in $O(|{\cal X}_n|)$ time,
provided we can generate the members of ${\cal X}_n$ 
in ascending order, in $O(|{\cal X}_n|)$ time. 
Based on the values for $|{\cal X}_n|$ for small values of $n$ we
conjecture that $|{\cal X}_n| = o(n)$, and leave open the question
of enumerating the members of 
${\cal X}_n$ in ascending order, in $O(|{\cal X}_n|)$ time.

As of now, we do not have a complete understanding of the set 
${\cal X}$. But  we will present a non-trivial sufficient condition 
(Theorem \ref {gap3theorem})for
a number $n$ to belong to ${\cal X}$, in terms of
the nature of the optimal ABP of $n$. In the last section we will
show an application of Theorem \ref {gap3theorem} 
 to improve the previously  known results
on the edge isoperimetric peak of complete binary trees.

Let $(P, \emptyset)$ be the ABP of a number $n$ where $P =
\{i_1,i_2,\ldots, i_h\}$ where $i_1 < i_2 < \ldots < i_h$. If for
each $j$ where $1 \le j < h$ we have $i_{j+1} - i_j \ge k$ for $k \ge 1$,
 we say
that the  ABP satisfies the ``gap-k condition''.
Note that if an ABP satisfies the gap-$k$ condition for
some $k \ge 1$, then it satisfies the
gap-$k'$ condition for all $ 1 \le k' \le k$.
  The following
observation is a direct consequence of Lemma \ref {lemma:greedseq} (1)
and Lemma \ref {lemma:greedy}.

\begin {observation}
\label  {gapobservation}

If an ABP of $n$ satisfies the gap-1 condition for some $k \ge 1$,
(i.e. if no terms repeat)  then
it is a greedy ABP and thus a  minimal ABP of $n$.

\end {observation}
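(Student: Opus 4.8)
The plan is to recognise that the gap-$1$ condition is exactly the assertion that the exponents are pairwise distinct, and then to read off the conclusion directly from the two greedy lemmas already established. First I would unwind the definition of the gap-$1$ condition: with the exponents listed in increasing order $i_1 < i_2 < \cdots < i_h$, the condition $i_{j+1} - i_j \ge 1$ for each $j$ is, for integers, the statement $i_{j+1} > i_j$, which simply records that no exponent is repeated.

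Next I would match this to the hypothesis of Lemma \ref{lemma:greedseq}. Relabel the same multiset in nonincreasing order as $d_1 \ge d_2 \ge \cdots \ge d_s$ (so $s = h$ and $\sum_i \nu(d_i) = n$). Since no term repeats, the chain is strict, $d_1 > d_2 > \cdots > d_s$; in particular $d_1 > d_2 > \cdots > d_{s-1}$, which is precisely the condition in part (1) of Lemma \ref{lemma:greedseq}. Hence $\{d_1, \ldots, d_s\} = \text{Greedy}(n)$, so the given ABP is the greedy ABP of $n$.

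Finally, minimality follows immediately from Lemma \ref{lemma:greedy}: that lemma gives $\tau_P(n) = |\text{Greedy}(n)|$, and since our ABP coincides with $\text{Greedy}(n)$ it uses exactly $\tau_P(n)$ parts and is therefore a minimal ABP of $n$.

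I expect no real obstacle here; the entire content is the observation that a non-repeating sequence trivially satisfies the strict-inequality hypothesis of Lemma \ref{lemma:greedseq}(1), which is the one case distinguishing a greedy partition (where only the two smallest parts may coincide) from an arbitrary one. The only care needed is the harmless bookkeeping in switching between the increasing indexing used to define the gap-$k$ condition and the decreasing indexing used in Lemma \ref{lemma:greedseq}.
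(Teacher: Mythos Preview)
Your proposal is correct and is exactly the argument the paper intends: it states that the observation is a direct consequence of Lemma~\ref{lemma:greedseq}(1) and Lemma~\ref{lemma:greedy}, which is precisely what you have unwound.
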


\begin{theorem} \label{gap3theorem}
Let $n = \nu_{i_1} + \nu_{i_2} + \cdots + \nu_{i_k}$. If for every
$j$, $2 \leq j \leq k$, $i_j - i_{j-1} \ge 3$ $($i.e., if $n$
satisfies the gap-3 condition$)$ then we have $\delta (n) = \delta_P
(n)= k$.
\end{theorem}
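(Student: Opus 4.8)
The plan is to reduce the whole statement to a lower bound on the number of parts in a signed almost binary partition. Since the given gap-$3$ ABP in particular satisfies the gap-$1$ condition, Observation~\ref{gapobservation} shows it is the greedy, hence minimal, ABP of $n$, so $\tau_P(n)=k$ and therefore $\delta_P(n)=\tau_P(n)=k$ by Lemma~\ref{deltaPtauPlemma}. As $\delta(n)\le\delta_P(n)$ always holds, and $\delta(n)=\tau(n)$ by Theorem~\ref{tau:n:is:delta:n:}, the whole theorem comes down to proving the matching lower bound $\tau(n)\ge k$; that is, every SABP of $n$ must use at least $k$ parts.

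I would prove $\tau(n)\ge k$ by induction on $k$, the base case $k=1$ being immediate since $n\ge1$. For the inductive step, fix a minimal SABP $(P,N)$ of $n$ which, by Theorem~\ref{thm:normal}, I may assume to be in normal form. First I record that the gap-$3$ hypothesis pins down the top scale: writing $n=\sum_{j}2^{i_j}-k$ and using $i_j-i_{j-1}\ge3$, one gets $2^{i_k}\le n<2^{i_k+1}$, so $d:=\lfloor\log n\rfloor=i_k$ and hence $\max(P)\in\{i_k,\,i_k+1\}$. The aim in each case is to produce, without increasing the part count, a minimal SABP of $n$ in which $i_k$ actually occurs on the positive side; deleting that one part then yields a SABP of the gap-$3$ number $m:=n-\nu_{i_k}=\nu_{i_1}+\cdots+\nu_{i_{k-1}}$ with one fewer part, so that $\tau(m)\le\tau(n)-1$, and the induction hypothesis $\tau(m)=k-1$ gives $\tau(n)\ge k$.

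Two of the cases are routine. If $\max(P)=i_k$ then $i_k\in P$ and I delete it directly. If $\max(P)=i_k+1$ and $i_k\in N$, I use the identity $\nu_{i_k+1}-\nu_{i_k}=2^{i_k}=\nu_{i_k}+\nu_1$ to replace the pair $\{+\nu_{i_k+1},\,-\nu_{i_k}\}$ by $\{+\nu_{i_k},\,+\nu_1\}$; this preserves both the value $n$ and the part count, is therefore still minimal, and now has $i_k\in P$, reducing to the previous case.

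The main obstacle is the remaining case, $\max(P)=i_k+1$ with $i_k\notin N$, where no such cheap rewriting is available. Here the gap-$3$ condition must be used quantitatively: since $m=\nu_{i_1}+\cdots+\nu_{i_{k-1}}$ with $i_{k-1}\le i_k-3$ one has the sharp bound $m<2^{i_k-2}$, and combining this with normality of $(P,N)$ forces a rigid structure --- concretely $P=\{i_k+1\}\cup P'$ with $\max(P')\le i_k-3$, together with $\nu(N)=2^{i_k}-(m-\nu(P'))$ and $\max(N)=i_k-1$. I expect the crux to be showing that, under these constraints, the combined part count $1+|P'|+|N|$ cannot drop below $k$; this amounts to a careful comparison of the greedy length $G_\infty\!\bigl(2^{i_k}-(m-\nu(P'))\bigr)=|N|$ against $k-1-|P'|$, exploiting that $m-\nu(P')$ is a positive integer smaller than $2^{i_k-2}$. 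Establishing this last inequality is where the real work lies; once it is in place, the induction closes and the equality $\delta(n)=\delta_P(n)=k$ follows.
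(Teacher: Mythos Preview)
Your reduction to the inequality $\tau(n)\ge k$ and the inductive setup are correct, and your Cases~1 and~2 mirror exactly what the paper does (its Claim~0 and Claim~2): if $i_k\in P$, or if $i_k\in N$ and one rewrites $\nu_{i_k+1}-\nu_{i_k}=\nu_{i_k}+\nu_1$, then one immediately peels off $\nu_{i_k}$ and invokes the induction hypothesis on $m=n-\nu_{i_k}$.

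The genuine gap is your Case~3 ($\max(P)=i_k+1$, $i_k\notin P\cup N$), which you explicitly leave unfinished. What you are really asking for there is the lower bound
\[
|P'|+|N|\ \ge\ k-1,
\]
and since $(N,P')$ is a SABP of the positive integer $2^{i_k}-m$, this is equivalent to proving $\tau\!\bigl(2^{i_k}-m\bigr)\ge k-1$ whenever $m=\nu_{i_1}+\cdots+\nu_{i_{k-1}}$ satisfies the gap-$3$ condition with $i_{k-1}\le i_k-3$. That auxiliary statement is not obviously any easier than the theorem itself; the number $2^{i_k}-m$ has no visible gap structure, and a direct comparison of $G_\infty\!\bigl(2^{i_k}-(m-\nu(P'))\bigr)$ against $k-1-|P'|$ does not fall out of the greedy machinery you have available. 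So the ``real work'' you allude to is in fact the heart of the matter, and your outline gives no mechanism for it.

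The paper handles exactly this case by abandoning the arithmetic of partitions and passing to the tree picture. From the minimal normal-form SABP it builds the optimal forest $S=S'\cup S''$ of Theorem~\ref{tau:n:is:delta:n:}, where $S'$ is the C-tree of $(\{i_k+1\},N)$. Using $m<2^{i_k-2}$ (your Claim~4) together with $i_k\notin N$, it shows that the root of $S'$ is saturated while its right child $x$ and grandchild $x'$ are both unsaturated; a local surgery on the three vertices $r,x,x'$ then produces a forest on $m=n-\nu_{i_k}$ vertices with at least two fewer outgoing edges, contradicting the induction hypothesis $\delta(m)=k-1$. This geometric surgery is precisely the missing idea in your Case~3; without something of comparable strength your argument does not close.
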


\begin{proof}
In view of observation \ref {gapobservation},  we have
$\delta_P(n) = k$.
We prove that $\delta(n) = k$ by  induction on the number of terms $t$.
When $t = 1,2$, this is easy to verify.
Now let $t =k$ where $k \ge 3$. 
 Let us assume that the Theorem  is true for all
$t < k$. (If $k \ge 3$ then $i_k \ge 7$ because of
the gap-3 condition.) 

Suppose for contradiction that  $\delta(n) \le k-1$.

\noindent {\bf Claim 0:} Let $(P,N)$ be a minimal SABP of $n$. Then
$i_k \notin P$.

Suppose for contradiction that $i_k \in P$. Then consider the number
$n' = n - \nu_{i_k}$. Clearly $(P - \{i_k\}, N)$ is a SABP of $n'$.
Since we have assumed that $\delta(n) = \tau(n) \le k-1$, we get
$\tau(n') \le k-2$. This is a contradiction, since $n' = \sum_{j =
1}^{k-1} \nu_{i_j}$ satisfies the gap-condition, and thus by
induction hypothesis we should have $\tau(n') = k-1$.
$\Box$.

Consider any  minimal SABP  $(P,N)$ of $n$. By Theorem \ref
{thm:normalform}, we can assume that this minimum SABP is in normal
form. Let $\max(P)=i_m$ and $\max (N) = i_n$
respectively. Since $(P,N)$ is in normal form, we have $\max(N \cup
P) =   \max (P)= i_m \in \{ \lfloor \log n  \rfloor, \lfloor \log n
\rfloor + 1 \}$. Since $\nu_{i_k}  <  n < \nu_{i_k + 1} $ (by
Observation \ref {gapobservation} and Lemma \ref {lemma:greedseq} part (3)),
 it is easy to verify that  $\lfloor \log n \rfloor
= i_k$.
Thus $i_m \in \{i_k, i_k + 1\}$. In view of Claim 0, $i_m
\neq i_k$. Thus $i_m = i_k+1$.

\noindent {\bf Claim 1:} In any minimal SABP $(P,N)$ of $n$ in
normal form, $i_m$ does not repeat in the multiset $P$.

Suppose it repeats. Then since the SABP is assumed to be in the
normal form, $P = \text Greedy(\nu (P) ) $.
 Thus by Lemma \ref {lemma:greedseq},
 if $i_m$ repeats in the multiset $P$, $P =
\{i_m,i_m\}$. Clearly  $i_n < i_m$. Recalling that
$i_m = i_k + 1$,  we have $n = 2 \nu_{i_k + 1}
- \sum_{j\in N} \nu_{j} \ge (2^{i_k + 2} - 2) - (2^{i_k + 1} - 2) =
2^{i_k + 1} > \sum_{j=1}^k \nu_{i_j} = n$, a contradiction.
$\Box$

\noindent {\bf Claim 2:}  If $(P,N)$ is a minimal SABP of $n$
in normal form with $i_n = \max N$, then we have $i_n < i_k$.

Recall that $i_m = i_k+1$.
Since SABP $(P,N)$ is in normal form, $i_n < i_m$.
 Suppose $i_n = i_k$. Then we can get
another minimal SABP (not necessarily in
normal form), say $(P',N')$, for $n$, by taking $P' = P -
\{i_k + 1\} \cup \{i_k,1\}$ and $N' = N - \{ i_k \}$.
This is clearly a contradiction in
view of Claim 0 since $(P',N')$ is minimal, but  $i_k \in P'$.

Now consider a minimal SABP $(P,N)$ of $n$ in normal form.
By Theorem \ref {tau:n:is:delta:n:}, we can find a forest $S$ in $\mathcal {T}_{\infty}$,
such that $S= S' \cup S''$ where $S'$ is the C-tree of $(\{i_m\}, N)$
and $S''$ is the P-forest of $(P-\{i_m\}, \emptyset)$.

By  the definition of a C-tree, the tree $S'$ has height $i_m = i_k+1$.
We say that a node in  $S'$ (seen as a subtree of
$ \mathcal {T}_{\infty}$)  is {\it saturated} {\bf either} if it is a leaf of
$ \mathcal {T}_{\infty}$ {\bf or} if both its  children (with respect to
$\mathcal {T}_{\infty}$) belong to $S'$. Note that by the
definition of C-tree, in $S'$ a node at
a height $t$ is unsaturated if and only if $t-1 \in N$.
Let $r$ be the root of $S'$.
Since by
Claim 2, $i_k \notin N$, and since $r$ is at height $i_k + 1$,
we have the following claim.

\noindent {\bf Claim 3:}  The root $r$  of $S'$ is
saturated.

Let $x$ and $y$ be the right child and
left child of $r$, respectively. Note that by the defintion of
C-tree,  the subtree of $S'$ rooted at $y$ is
complete and  has $\nu_{i_k}$ vertices in it. Let $S_1$ represent
the tree obtained by removing the subtree rooted at $y$ from
$S'$. Then clearly, $S_1 \cup S''$ together is a forest in $\mathcal {T}_{\infty}$, on  $n' = n - \nu_{i_k}$ vertices, and with number of out going
edges equal to  $\delta(n) + 1 \le k-1 + 1 = k$. (We have to add $1$ to
$\delta(n)$ because a  new out going edge incident on $r$ is created
by the removal of the subtree rooted at $y$, namely the edge
$(r,y)$.) By induction hypothesis we know that $\delta(n') = k-1$,
since  $n'$ clearly has a ABP satisfying the gap-3 condition.
We will now show that by  a slight modification of
$S_1$,  we can reduce the number of out going edges by at
least $2$ and get a representation of $n'$ with only $k-2$ out going
edges which will be a contradiction to the induction hypothesis.
First we make an easy observation.

\noindent {\bf Claim 4:}  $n' < \nu_{i_k - 2}$.

Recalling that by gap-3 condition, $i_{k-1} \le i_k - 3$, we get:

\begin {eqnarray}
  n' &=& n - \nu_{i_k} = \sum_{j=1}^{k-1} \nu_{i_j} \\
     &\le & \nu_{(i_{k-1} + 1)} - 1 \\
     &<& \nu_{i_k - 2}
 \end {eqnarray}

$\Box$

Let  $x$  be the
right child of $r$.

\noindent {\bf Claim 5:}  $x$ is unsaturated in $S'$, but it
has a left child (say $x'$).

Since $r$ is at a height  $ i_k + 1$, $x$
is at a height of $i_k \ge 7$.
If $x$ is saturated it has a complete left subtree
with $\nu_{i_k - 1}$ vertices in it. Therefore $n' \ge
\nu_{i_k -1} > \nu_{i_k -2}$ which contradicts Claim 4. Thus
$x$ is unsaturated, i.e. it does not have a right
child. If $x$ does not have a left child also, $S_1$
contains only $2$ nodes, namely $r$ and $x$ but has
4 outgoing edges. Clearly this is not optimum for $2$
nodes: We can replace $S_1$ with two leaves of $\mathcal {T}_{\infty}$,
thereby reducing the total number of outgoing edges by 2, which
contradicts the induction hypothesis that $\delta(n') = k - 1$.
We infer that $x$ has a left child, say $x'$.

\noindent {\bf Claim 6:}  $x'$ is unsaturated in $S'$, but it
has a left child (say $x''$).

Clearly $x'$ is at a height of $i_k - 1$ and if it is
saturated it will have a complete left subtree  and
therefore we get  $ n' > \nu_{i_k -2}$ a contradiction
to Claim 4. Thus $x'$ has no right child.
Now if there is no left child also
for $x'$, $S_1$ contains only $3$ vertices, namely $r,x,x'$ and
together they have $5$ out going edges. This is clearly not the
optimum representation for $3$ vertices. Rather, there exists
representation for $3$  vertices with just one out going edge.

In view of Claim 5,
clearly there are $4$ out going edges incident on the vertices $r,x$
and $x'$. We replace  $S_1$ with a forest
consisting of the subtree of $S_1$ rooted at $x''$
and a complete binary tree of $3$ vertices reducing the number of out going
edges by $2$. Thus we get a representation for $n'$ using at most
$k-2$ out going edges, a contradiction to the induction hypothesis.
Hence the theorem.
\end {proof}

In view of the above theorem it is natural to ask if $n$ has
an ABP satisfying
the gap-2 condition rather than gap-3 condition, then can we still
say $\delta_P(n)=\delta(n)$. This is not true as the following example illustrates.

\begin{example}
Applying the greedy algorithm to $n = 46912496118419$, we
obtain that  $n = \nu({1,3,5,\ldots,45})$. This ABP   clearly satisfies the
gap-2 condition and shows that $\delta_P(n) = 23$.  On the other hand,
$n = \nu({46}) - \nu({7}) - \nu({8,10,12,\ldots,44})$, showing that
$\delta(n) \le 21 < 23 = \delta_P(n)$.
\end{example}

\subsection{Improved lower bound for edge isoperimetric peak for $B_d$ }

The {\it edge isoperimetric
peak} of  a finite graph 
$G$, denoted as $\hat{\delta}_G(n)$ where $|V(G)| = n$, is
defined as $\hat{\delta}_G(n) = \max_{1 \leq i \leq
n}{\delta(i,G)}$.

The problem of finding   the isoperimetric
peak of  a complete binary tree of depth $d$ (denoted as $B_d$)  was studied in
\cite{otachi} and \cite{BharadwajChandran}. In \cite{otachi} it is
shown that $\hat{\delta}_{B_d} \geq \frac{d-(8+2\log d)}{8+2\log d}$
and in \cite{BharadwajChandran} it is shown that $\hat{\delta}_{B_d}
\geq \frac{d}{5}$ (see the proof of Corollary 1 in
\cite{BharadwajChandran}).  We will show that
using Theorem \ref{gap3theorem} we can get  a better
lower bound for  the edge isoperimetric peak of $B_d$. 
To do this, we first make the following simple observation:

\begin {lemma}
\label {peaklowerboundconversionlemma} 
For  $1 \le n \le 2^d-1$,  $\delta (n, B_d) \ge \delta 
(n, \mathcal {T}_\infty) - 1$
\end {lemma}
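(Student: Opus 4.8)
The plan is to realize $B_d$ as an induced subtree of $\mathcal{T}_\infty$ and then transport an optimal subset of $B_d$ into $\mathcal{T}_\infty$, losing at most one cut edge in the process. Concretely, I would fix a vertex $v_r$ at level $d$ of $\mathcal{T}_\infty$ and consider the complete binary tree of depth $d$ consisting of $v_r$ together with all of its descendants down to the leaf level. Since $B_d$ has exactly $\nu_d = 2^d-1$ vertices and all of its leaves sit at the lowest level of $\mathcal{T}_\infty$, this induced subtree is a faithful copy of $B_d$, which I identify with $B_d$.

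Next I would pick a subset $S \subseteq V(B_d)$ with $|S|=n$ (which exists since $1 \le n \le 2^d-1 = |V(B_d)|$) that is optimal in $B_d$, i.e. $|(S,\overline{S})_{B_d}| = \delta(n,B_d)$, and view $S$ as a subset of $V(\mathcal{T}_\infty)$. The key step is a comparison of the two cut values by degree bookkeeping. Every leaf of $B_d$ is a leaf of $\mathcal{T}_\infty$ (degree $1$ in both), and every internal vertex of $B_d$ other than $v_r$ keeps its two children and its parent, hence has degree $3$ in both graphs; each such vertex therefore contributes identically to the two cuts. The only vertex whose neighbourhood differs is the root $v_r$: in $B_d$ it has degree $2$, whereas in $\mathcal{T}_\infty$ it additionally has a parent $\mathrm{par}(v_r) \notin V(B_d)$. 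Consequently the only edge that can belong to $(S,\overline{S})_{\mathcal{T}_\infty}$ but not to $(S,\overline{S})_{B_d}$ is the edge $(v_r,\mathrm{par}(v_r))$, and this occurs precisely when $v_r \in S$ (since $\mathrm{par}(v_r) \notin S$ always). Hence
\[
|(S,\overline{S})_{\mathcal{T}_\infty}| \le |(S,\overline{S})_{B_d}| + 1 = \delta(n,B_d) + 1.
\]

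Finally, since $\delta(n) = \delta(n,\mathcal{T}_\infty) \le |(S,\overline{S})_{\mathcal{T}_\infty}|$, combining the two inequalities yields $\delta(n) \le \delta(n,B_d)+1$, which rearranges to the claimed bound $\delta(n,B_d) \ge \delta(n,\mathcal{T}_\infty)-1$. I do not expect a serious obstacle: the argument is essentially a local degree count. The one point that needs care is verifying that the embedding introduces \emph{no} edges beyond $(v_r,\mathrm{par}(v_r))$—in particular that the leaves of $B_d$ are genuinely leaves of $\mathcal{T}_\infty$, so that no downward edges are added—and this is exactly guaranteed by aligning the leaf level of the copy of $B_d$ with the lowest level of $\mathcal{T}_\infty$.
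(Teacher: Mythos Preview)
Your argument is correct and is precisely the natural embedding-and-degree-count that the paper has in mind; indeed the paper states this lemma as a ``simple observation'' without supplying a proof, and what you wrote is exactly the intended justification. There is nothing to add.
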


Now we can get a better lower bound for the edge isoperimetric
peak of $B_d$, compared to the previous $d/5$. 

\begin {theorem}
\label {isoperimetricpeakthm}
 $\hat{\delta}_{B_d}
 \ge  \left \lfloor 
d/3 \right \rfloor - 1$.
\end {theorem}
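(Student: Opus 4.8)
The plan is to exhibit, for any depth $d$, a single value of $n$ with $1 \le n \le 2^d - 1$ whose isoperimetric number in $\mathcal{T}_\infty$ is large, and then invoke Lemma \ref{peaklowerboundconversionlemma} to transfer that lower bound to $B_d$. The natural source of such an $n$ is Theorem \ref{gap3theorem}: a number admitting an ABP that satisfies the gap-$3$ condition has $\delta(n) = \delta_P(n) = k$, where $k$ is the number of parts. So the task reduces to packing as many parts as possible, subject to the gap-$3$ spacing, into exponents that keep $n$ below $2^d - 1$.

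Concretely, I would take $n = \nu_{i_1} + \nu_{i_2} + \cdots + \nu_{i_k}$ with exponents $i_j = 1, 4, 7, \ldots, 3(k-1) + 1$, i.e. an arithmetic progression of common difference $3$ starting at $1$. This clearly satisfies the gap-$3$ condition, so by Theorem \ref{gap3theorem} we get $\delta(n, \mathcal{T}_\infty) = k$. The largest exponent is $i_k = 3(k-1) + 1 = 3k - 2$, and by Lemma \ref{lemma:greedseq}(3) we have $n \le 2^{i_k + 1} - 2 = 2^{3k-1} - 2$. To guarantee $n \le 2^d - 1$ it suffices to require $3k - 1 \le d$, i.e. $k \le (d+1)/3$, so I would choose $k = \lfloor (d+1)/3 \rfloor$ (one should double-check the exact rounding against the claimed bound, but the order is right). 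Then Lemma \ref{peaklowerboundconversionlemma} gives $\delta(n, B_d) \ge \delta(n, \mathcal{T}_\infty) - 1 = k - 1$, and since $\hat{\delta}_{B_d} \ge \delta(n, B_d)$ by definition of the peak, we conclude $\hat{\delta}_{B_d} \ge k - 1 = \lfloor (d+1)/3 \rfloor - 1$, which is at least $\lfloor d/3 \rfloor - 1$.

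The only real care-point is the arithmetic relating $i_k$, the bound on $n$ from Lemma \ref{lemma:greedseq}(3), and the depth constraint $n \le 2^d - 1$; one must make sure the chosen $k$ simultaneously keeps $n$ small enough to live inside $B_d$ and large enough to yield the stated floor. I would verify the boundary cases of the floor function explicitly (for instance checking $d \equiv 0, 1, 2 \pmod 3$ separately) to be sure the $-1$ from Lemma \ref{peaklowerboundconversionlemma} does not erode the bound below $\lfloor d/3 \rfloor - 1$. Everything else is a direct chaining of results already established: Theorem \ref{gap3theorem} supplies the value $\delta(n, \mathcal{T}_\infty)$, Lemma \ref{peaklowerboundconversionlemma} transfers it to $B_d$, and the definition of $\hat{\delta}_{B_d}$ as a maximum finishes the argument. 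No genuinely hard step remains; the substance was already spent in proving the gap-$3$ theorem, and this lower bound is its payoff.
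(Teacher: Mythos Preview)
Your proposal is correct and follows essentially the same approach as the paper: both choose $n = \nu_1 + \nu_4 + \cdots + \nu_{3(k-1)+1}$, invoke Theorem~\ref{gap3theorem} to get $\delta(n,\mathcal{T}_\infty) = k$, and then apply Lemma~\ref{peaklowerboundconversionlemma}. The only difference is that you take $k = \lfloor (d+1)/3 \rfloor$ while the paper takes $k = \lfloor d/3 \rfloor$; your choice is slightly more aggressive but your verification that $n \le 2^{3k-1} - 2 \le 2^d - 1$ via Lemma~\ref{lemma:greedseq}(3) is sound, and the resulting bound $\lfloor (d+1)/3 \rfloor - 1 \ge \lfloor d/3 \rfloor - 1$ covers the stated theorem.
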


\begin {proof}
Clearly if we take $n = \nu_1 + \nu_4 + \nu_7 + \ldots + \nu_{(\left \lfloor 
d/3 \right \rfloor - 1) 3 + 1} $, then  $n \le 2^d -1$.  By Theorem \ref {gap3theorem} and
Lemma \ref {peaklowerboundconversionlemma}, we get $\hat{\delta}_{B_d}
\ge \delta(n, B_d) \ge  \left \lfloor 
d/3 \right \rfloor - 1$.
$\Box$
\end {proof}

Note that in the context of the edge isoperimetric
peak problem, Theorem \ref {gap3theorem} gives us more than what is claimed in
Theorem \ref {isoperimetricpeakthm}. For any $k \le   \left \lfloor 
d/3 \right \rfloor - 1$, it allows to find some numbers $n < 2^d-1$, such
that $\delta(n, B_d) = k$.  The following Theorem captures this point.   

\begin {theorem}
If $k = \left \lfloor  d/3 \right \rfloor - 1 - t$, then 
$| \{ n :  n \le 2^d -1, \delta(n,B_d) \ge k \}|  \ge 
 {  {\left \lfloor  d/3 \right \rfloor } \choose t }$   
\end {theorem}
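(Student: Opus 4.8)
The plan is to promote the single integer constructed in the proof of Theorem \ref{isoperimetricpeakthm} into an entire combinatorial family, one integer for each sufficiently large subset of a fixed set of admissible exponents, and then simply count that family.

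First I would fix the exponent set $E = \{1, 4, 7, \ldots, 3\lfloor d/3\rfloor - 2\}$, an arithmetic progression of common difference $3$ having exactly $\lfloor d/3\rfloor$ elements, whose largest element satisfies $\max E = 3\lfloor d/3\rfloor - 2 \le d-2$. For every subset $A \subseteq E$ with $|A| = \lfloor d/3\rfloor - t$ I would set $n_A = \nu(A) = \sum_{i \in A}(2^i-1)$ (taking $A$ nonempty, i.e. $t < \lfloor d/3\rfloor$). Since any subset of $E$ still has consecutive exponents differing by at least $3$, each $A$ inherits the gap-$3$ condition, so Theorem \ref{gap3theorem} applies and gives $\delta(n_A, \mathcal{T}_\infty) = |A| = \lfloor d/3\rfloor - t$.

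Next I would check the size hypothesis of Lemma \ref{peaklowerboundconversionlemma}. Because $\max A \le 3\lfloor d/3\rfloor - 2 \le d-2$, part (3) of Lemma \ref{lemma:greedseq} yields $n_A \le 2^{d-1} - 2 < 2^d-1$, so indeed $1 \le n_A \le 2^d-1$. Lemma \ref{peaklowerboundconversionlemma} then gives $\delta(n_A, B_d) \ge \delta(n_A, \mathcal{T}_\infty) - 1 = \lfloor d/3\rfloor - t - 1 = k$, so each $n_A$ is a witness belonging to the set whose cardinality we must bound from below.

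The remaining and only genuinely delicate point is \emph{injectivity}: distinct subsets $A$ must yield distinct integers $n_A$. Here I would invoke Observation \ref{gapobservation} together with part (1) of Lemma \ref{lemma:greedseq}: a set of distinct exponents is exactly $\text{Greedy}(n_A)$, so $A$ is uniquely recoverable from $n_A$ and the map $A \mapsto n_A$ is injective. Consequently the number of admissible $n \le 2^d-1$ with $\delta(n,B_d) \ge k$ is at least the number of subsets $A \subseteq E$ of size $\lfloor d/3\rfloor - t$, which is $\binom{\lfloor d/3\rfloor}{\lfloor d/3\rfloor - t} = \binom{\lfloor d/3\rfloor}{t}$, as claimed. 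I expect this injectivity/distinctness verification to be the main obstacle to state cleanly, but the uniqueness of the greedy almost-binary partition reduces it to a one-line argument once set up correctly.
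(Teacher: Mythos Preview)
Your proposal is correct and follows essentially the same approach as the paper: the paper also starts from the fixed exponent set $\{1,4,7,\ldots,3\lfloor d/3\rfloor-2\}$, removes any $t$ terms, and appeals to Theorem~\ref{gap3theorem} together with Lemma~\ref{peaklowerboundconversionlemma}. Your write-up is in fact more careful than the paper's sketch, since you explicitly verify the bound $n_A\le 2^d-1$ via Lemma~\ref{lemma:greedseq}(3) and justify injectivity via the uniqueness of the greedy ABP, whereas the paper leaves both of these implicit.
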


\begin {proof}
Consider the ABP  
$\nu_1 + \nu_4 + \nu_7 + \ldots + \nu_{(\left \lfloor 
d/3 \right \rfloor - 1) 3 + 1}$. 
We can remove any $t$ of the terms from this ABP to
get another ABP of $ \left \lfloor  d/3 \right \rfloor  - t$ terms,
and that ABP would clearly satisfy the gap-3 condition. By Theorem 
\ref {gap3theorem} each of these   ${  {\left \lfloor  d/3 \right \rfloor } \choose t }$ ABPs, corresponds to a distinct number $n < 2^d -1$, satisfying
the property $\delta (n,B_d) = k$. 
$\Box$
\end {proof}

\section{Acknowledgements}

The authors wish to thank Jeff Shallit, Jeorg Arndt, and David
Wasserman for useful input, particularly about $\delta_P(n)$.

\end{document}